\documentclass[12pt]{amsart}
\usepackage{amsmath,amsthm,amsfonts,amssymb,mathrsfs}
\usepackage{color}

\usepackage{tikz}

\usepackage{amssymb,cite}
\usepackage[colorlinks,plainpages,citecolor=magenta, linkcolor=blue, backref]{hyperref}

\usepackage{hyperref}
\date{\today}

\usepackage{hyperref}
\input{xy}
 \xyoption{all}
 \xyoption{arc}

\newtheorem{theorem}{Theorem}[section]

\newtheorem{proposition}[theorem]{Proposition}
\newtheorem{corollary}[theorem]{Corollary}

\newtheorem{lemma}[theorem]{Lemma}
\theoremstyle{definition}

\begin{document}

\title[On injective endomorphisms of the semigroup $\boldsymbol{B}_{\omega}^{\mathscr{F}^4}$]{On injective  endomorphisms of the semigroup $\boldsymbol{B}_{\omega}^{\mathscr{F}^4}$ with a four-element family $\mathscr{F}^4$ of inductive non-empty subsets of $\omega$}
\author{Oleg Gutik and Marko Serivka}
\address{Ivan Franko National University of Lviv, Universytetska 1, Lviv, 79000, Ukraine}
\email{oleg.gutik@lnu.edu.ua, ogutik@gmail.com, marko.serivka@lnu.edu.ua}

\keywords{Bicyclic monoid, inverse semigroup, bicyclic extension, endomorphism, semigroup of endomorphisms, inductive set.}

\subjclass[2020]{20M18, 20F29, 20M10.}

\begin{abstract}
We describe injective endomorphisms of the semigroup $\boldsymbol{B}_{\omega}^{\mathscr{F}^4}$ with a four-element family $\mathscr{F}^4$ of inductive non-empty subsets of $\omega$.  In particular we proved that every injective monoid endomorphism of the semigroup $\boldsymbol{B}_{\omega}^{\mathscr{F}^4}$  is the identity transformation. Also we describe all (not necessary monoid) injective endomorphism of $\boldsymbol{B}_{\omega}^{\mathscr{F}^4}$.
\end{abstract}

\maketitle


\section{\textbf{Introduction, motivation and main definitions}}

We shall follow the terminology of the monographs~\cite{Clifford-Preston-1961, Clifford-Preston-1967, Lawson=1998}. By $\omega$ we denote the set of all non-negative integers. If $\mathfrak{f}\colon X\to Y$ is a map, then by $(x)\mathfrak{f}$ and $(A)\mathfrak{f}$ we denote the image of $x\in X$ and $A\subseteq X$ under $\mathfrak{f}$, respectively.

\smallskip

Let $\mathscr{P}(\omega)$ be  the family of all subsets of $\omega$. For any $F\in\mathscr{P}(\omega)$ and any integer $n$ we put $n+F=\{n+k\colon k\in F\}$ if $F\neq\varnothing$ and $n+\varnothing=\varnothing$.
A subfamily $\mathscr{F}\subseteq\mathscr{P}(\omega)$ is called \emph{${\omega}$-closed} if $F_1\cap(-n+F_2)\in\mathscr{F}$ for all $n\in\omega$ and $F_1,F_2\in\mathscr{F}$. For any $a\in\omega$ we denote $[a)=\{x\in\omega\colon x\geqslant a\}$.

\smallskip

A subset $A$ of $\omega$ is said to be \emph{inductive}, if $i\in A$ implies $i+1\in A$. Obvious, that $\varnothing$ is an inductive subset of $\omega$.

For an arbitrary semigroup $S$ any homomorphism $\alpha\colon S\to S$ is called an \emph{endomorphism} of $S$. If the semigroup has the identity element $1_S$ then the endomorphism $\alpha$ of $S$ such that $(1_S)\alpha=1_S$ is said to be a \emph{monoid endomorphism} of $S$. A bijective endomorphism of $S$ is called an \emph{automorphism}.

\smallskip

A semigroup $S$ is called {\it inverse} if for any
element $x\in S$ there exists a unique $x^{-1}\in S$ such that
$xx^{-1}x=x$ and $x^{-1}xx^{-1}=x^{-1}$. The element $x^{-1}$ is
called the {\it inverse of} $x\in S$.  

\smallskip

If $S$ is a semigroup, then we shall denote the subset of all
idempotents in $S$ by $E(S)$.  The semigroup
operation on $S$ determines the following partial order $\preccurlyeq$
on $E(S)$:
\begin{center}
$e\preccurlyeq f$ if and only if $ef=fe=e$.
\end{center}
This order is
called the {\em natural partial order} on $E(S)$. 

\smallskip

If $S$ is an inverse semigroup then the semigroup operation on $S$ determines the following partial order $\preccurlyeq$
on $S$: $s\preccurlyeq t$ if and only if there exists $e\in E(S)$ such that $s=te$. This order is
called the {\em natural partial order} on $S$ \cite{Wagner-1952}.

\smallskip

The \emph{bicyclic monoid} ${\mathscr{C}}(p,q)$ or the \emph{bicyclic monoid} is the semigroup with the identity $1$ generated by two elements $p$ and $q$ subjected only to the condition $pq=1$. The semigroup operation on ${\mathscr{C}}(p,q)$ is determined as
follows:
\begin{equation*}
    q^kp^l\cdot q^mp^n=q^{k+m-\min\{l,m\}}p^{l+n-\min\{l,m\}}.
\end{equation*}
It is well known that the bicyclic monoid ${\mathscr{C}}(p,q)$ is a bisimple (and hence simple) combinatorial $E$-unitary inverse semigroup and every non-trivial congruence on ${\mathscr{C}}(p,q)$ is a group congruence \cite{Clifford-Preston-1961}.

\smallskip

On the set $\boldsymbol{B}_{\omega}=\omega\times\omega$ we define the semigroup operation ``$\cdot$'' in the following way
\begin{equation}\label{eq-1.1}
  (i_1,j_1)\cdot(i_2,j_2)=
  \left\{
    \begin{array}{ll}
      (i_1-j_1+i_2,j_2), & \hbox{if~} j_1\leqslant i_2;\\
      (i_1,j_1-i_2+j_2), & \hbox{if~} j_1\geqslant i_2.
    \end{array}
  \right.
\end{equation}
It is well known that the bicyclic monoid $\mathscr{C}(p,q)$ is isomorphic to the semigroup $\boldsymbol{B}_{\omega}$ by the mapping $\mathfrak{h}\colon \mathscr{C}(p,q)\to \boldsymbol{B}_{\omega}$, $q^kp^l\mapsto (k,l)$ (see: \cite[Section~1.12]{Clifford-Preston-1961} or \cite[Exercise IV.1.11$(ii)$]{Petrich-1984}).

\smallskip

Next we shall describe the construction which is introduced in \cite{Gutik-Mykhalenych=2020}.

\smallskip

Let $\mathscr{F}$ be an ${\omega}$-closed subfamily of $\mathscr{P}(\omega)$. On the set $\boldsymbol{B}_{\omega}\times\mathscr{F}$ we define the semigroup operation ``$\cdot$'' in the following way
\begin{equation}\label{eq-1.2}
  (i_1,j_1,F_1)\cdot(i_2,j_2,F_2)=
  \left\{
    \begin{array}{ll}
      (i_1-j_1+i_2,j_2,(j_1-i_2+F_1)\cap F_2), & \hbox{if~} j_1\leqslant i_2;\\
      (i_1,j_1-i_2+j_2,F_1\cap (i_2-j_1+F_2)), & \hbox{if~} j_1\geqslant i_2.
    \end{array}
  \right.
\end{equation}
In \cite{Gutik-Mykhalenych=2020} is proved that if the family $\mathscr{F}\subseteq\mathscr{P}(\omega)$ is ${\omega}$-closed then $(\boldsymbol{B}_{\omega}\times\mathscr{F},\cdot)$ is a semigroup. Moreover, if an ${\omega}$-closed family  $\mathscr{F}\subseteq\mathscr{P}(\omega)$ contains the empty set $\varnothing$ then the set
$ 
  \boldsymbol{I}=\{(i,j,\varnothing)\colon i,j\in\omega\}
$ 
is an ideal of the semigroup $(\boldsymbol{B}_{\omega}\times\mathscr{F},\cdot)$. For any ${\omega}$-closed family $\mathscr{F}\subseteq\mathscr{P}(\omega)$ the following semigroup
\begin{equation*}
  \boldsymbol{B}_{\omega}^{\mathscr{F}}=
\left\{
  \begin{array}{ll}
    (\boldsymbol{B}_{\omega}\times\mathscr{F},\cdot)/\boldsymbol{I}, & \hbox{if~} \varnothing\in\mathscr{F};\\
    (\boldsymbol{B}_{\omega}\times\mathscr{F},\cdot), & \hbox{if~} \varnothing\notin\mathscr{F}
  \end{array}
\right.
\end{equation*}
is defined in \cite{Gutik-Mykhalenych=2020}. 
The semigroup $\boldsymbol{B}_{\omega}^{\mathscr{F}}$ is called the \emph{bicyclic extension generated by the family}  $\mathscr{F}$ of $\omega$-closed subsets of $\omega$  \cite{Gutik-Mykhalenych=2020}.

In \cite{Gutik-Mykhalenych=2020} it is proven that $\boldsymbol{B}_{\omega}^{\mathscr{F}}$ is a combinatorial inverse semigroup and Green's relations, the natural partial order on $\boldsymbol{B}_{\omega}^{\mathscr{F}}$ and its set of idempotents are described.
Also, in \cite{Gutik-Mykhalenych=2020} the criteria when the semigroup $\boldsymbol{B}_{\omega}^{\mathscr{F}}$ is simple, $0$-simple, bisimple, $0$-bisimple, or it has the identity, are given.
In particularly in \cite{Gutik-Mykhalenych=2020} it is proven that the semigroup $\boldsymbol{B}_{\omega}^{\mathscr{F}}$ is isomorphic to the semigrpoup of ${\omega}{\times}{\omega}$-matrix units if and only if $\mathscr{F}$ consists of a singleton set and the empty set, and $\boldsymbol{B}_{\omega}^{\mathscr{F}}$ is isomorphic to the bicyclic monoid if and only if $\mathscr{F}$ consists of a non-empty inductive subset of $\omega$.

\smallskip

Group congruences on the semigroup  $\boldsymbol{B}_{\omega}^{\mathscr{F}}$ and its homomorphic retracts  in the case when an ${\omega}$-closed family $\mathscr{F}$ consists of inductive non-empty subsets of $\omega$ are studied in \cite{Gutik-Mykhalenych=2021}. It is proven that a congruence $\mathfrak{C}$ on $\boldsymbol{B}_{\omega}^{\mathscr{F}}$ is a group congruence if and only if its restriction on a subsemigroup of $\boldsymbol{B}_{\omega}^{\mathscr{F}}$, which is isomorphic to the bicyclic semigroup, is not the identity relation. Also in \cite{Gutik-Mykhalenych=2021}, all non-trivial homomorphic retracts and isomorphisms  of the semigroup $\boldsymbol{B}_{\omega}^{\mathscr{F}}$ are described. In \cite{Gutik-Mykhalenych=2022} it is proven that an injective endomorphism $\varepsilon$ of the semigroup $\boldsymbol{B}_{\omega}^{\mathscr{F}}$ is the indentity transformation if and only if  $\varepsilon$ has three distinct fixed points, which is equivalent to existence non-idempotent element $(i,j,[p))\in\boldsymbol{B}_{\omega}^{\mathscr{F}}$ such that  $(i,j,[p))\varepsilon=(i,j,[p))$.

\smallskip

In \cite{Gutik-Lysetska=2021, Lysetska=2020} the algebraic structure of the semigroup $\boldsymbol{B}_{\omega}^{\mathscr{F}}$ is established in the case when ${\omega}$-closed family $\mathscr{F}$ consists of atomic subsets of ${\omega}$. The structure of the semigroup $\boldsymbol{B}_{\omega}^{\mathscr{F}_n}$, for the family  $\mathscr{F}_n$ which is generated by the initial interval $\{0,1,\ldots,n\}$ of $\omega$, is studied in \cite{Gutik-Popadiuk=2023}. The semigroup of endomorphisms of $\boldsymbol{B}_{\omega}^{\mathscr{F}_n}$ is described in \cite{Gutik-Popadiuk=2022, Popadiuk=2022}.

\smallskip

In \cite{Gutik-Prokhorenkova-Sekh=2021} it is proven that the semigroup $\mathrm{\mathbf{End}}(\boldsymbol{B}_{\omega})$ of all endomorphisms of the bicyclic semigroup $\boldsymbol{B}_{\omega}$ is isomorphic to the semidirect products $(\omega,+)\rtimes_\varphi(\omega,*)$, where $+$ and $*$ are the usual addition and the usual multiplication on the set of non-negative integers $\omega$, respectively.

\smallskip

Fix an arbitrary positive integer $n$.
Later we assume that $\mathscr{F}^n$ is a family of inductive non-empty subsets of $\omega$ which consists of $n$ sets.  By Proposition~1 of \cite{Gutik-Mykhalenych=2021} for any $\omega$-closed family $\mathscr{F}$ of inductive subsets in $\mathscr{P}(\omega)$ there exists an $\omega$-closed family $\mathscr{F}^*$ of inductive subsets in $\mathscr{P}(\omega)$ such that $[0)\in \mathscr{F}^*$ and the semigroups $\boldsymbol{B}_{\omega}^{\mathscr{F}}$ and $\boldsymbol{B}_{\omega}^{\mathscr{F}^*}$ are isomorphic. Hence without loss of generality we may assume that the family $\mathscr{F}$ contains the set $[0)$, i.e., $\mathscr{F}^n=\{[0),[1),\ldots,[n-1)\}$.

\smallskip

In the paper \cite{Gutik-Pozdniakova=2022}  injective endomorphisms of the semigroup $\boldsymbol{B}_{\omega}^{\mathscr{F}^2}$ with the two-elements family $\mathscr{F}^2$ of inductive non-empty subsets of $\omega$ are studied. Also, in \cite{Gutik-Pozdniakova=2022} the authors describe the elements of the semigroup $\boldsymbol{End}^1_*(\boldsymbol{B}_{\omega}^{\mathscr{F}^2})$ of all injective monoid endomorphisms of the monoid $\boldsymbol{B}_{\omega}^{\mathscr{F}^2}$, and show that Green's relations $\mathscr{R}$, $\mathscr{L}$, $\mathscr{H}$, $\mathscr{D}$, and $\mathscr{J}$  on $\boldsymbol{End}^1_*(\boldsymbol{B}_{\omega}^{\mathscr{F}^2})$ coincide with the relation of equality. In \cite{Gutik-Pozdniakova=2023a, Gutik-Pozdniakova=2023b} the semigroup $\boldsymbol{End}^1(\boldsymbol{B}_{\omega}^{\mathscr{F}^2})$ of all  monoid endomorphisms of the monoid $\boldsymbol{B}_{\omega}^{\mathscr{F}^2}$ is studied.

In \cite{Gutik-Serivka=2025} we study the semigroup $\overline{\boldsymbol{End}}(\boldsymbol{B}_{\omega}^{\mathcal{F}^2})$ of all endomorphisms of the bicyclic extension $\boldsymbol{B}_{\omega}^{\mathcal{F}^2}$ with the two-element family $\mathcal{F}^2$ of inductive non-empty subsets of $\omega$. The submonoid  $\left\langle\varpi\right\rangle^1$ of $\overline{\boldsymbol{End}}(\boldsymbol{B}_{\omega}^{\mathcal{F}^2})$ with the property that every element of the semigroup $\overline{\boldsymbol{End}}(\boldsymbol{B}_{\omega}^{\mathcal{F}^2})$ has the unique representation as the product of the monoid endomorphism of $\boldsymbol{B}_{\omega}^{\mathcal{F}^2}$ and the element of $\left\langle\varpi\right\rangle^1$ is constructed.

\smallskip

In \cite{Gutik-Serivka=2023} we describe injective monoid endomorphisms of the semigroup $\boldsymbol{B}_{\omega}^{\mathscr{F}^3}$. Here we shown that for
every injective monoid endomorphism $\varepsilon$ of $\boldsymbol{B}_{\omega}^{\mathscr{F}^3}$ there exists a positive integer $k$ such that $\varepsilon=\alpha_{[k]}$ where the mapping $\alpha_{[k]}\colon \boldsymbol{B}_{\omega}^{\mathscr{F}^3}\to\boldsymbol{B}_{\omega}^{\mathscr{F}^3}$ is defined by the formula
\begin{equation*}
  (i,j,[p))\alpha_{[k]}=
  \left\{
    \begin{array}{ll}
      (ki,kj,[p)), & \hbox{if~} p\in\{0,1\};\\
      (k(i+1)-1,k(j+1)-1,[2)), & \hbox{if~} p=2,
    \end{array}
  \right.
\end{equation*}
for all $i,j\in\omega$ (see Theorem~5 of \cite{Gutik-Serivka=2023}). Also we prove that the monoid $\boldsymbol{End}_{\textsf{inj}}^1(\boldsymbol{B}_{\omega}^{\mathscr{F}^3})$ of all injective monoid endomorphisms of the semigroup $\boldsymbol{B}_{\omega}^{\mathscr{F}^3}$ is isomorphic to the multiplicative semigroup of positive integers.

In this paper we describe injective monoid endomorphisms of the semigroup $\boldsymbol{B}_{\omega}^{\mathscr{F}^4}$. In particular we prove that every injective monoid endomorphism of the semigroup $\boldsymbol{B}_{\omega}^{\mathscr{F}^4}$  is the identity transformation. Also we describe all (not necessary monoid) injective endomorphism of $\boldsymbol{B}_{\omega}^{\mathscr{F}^4}$.

\section{\textbf{On properties of injective monoid endomorphisms of the semigroup $\boldsymbol{B}_{\omega}^{\mathscr{F}^4}$}}\label{section-2}

For any $l,m,n\in\{0,1,2,3\}$, $l<m<n$, we denote
\begin{align*}
  \boldsymbol{B}_{\omega}^{\mathscr{F}^4_l}      &=\{(i,j,[l))\colon i,j\in\omega\}; \\
  \boldsymbol{B}_{\omega}^{\mathscr{F}^4_{l,m}}  &=\boldsymbol{B}_{\omega}^{\mathscr{F}^4_{l}}\cup\boldsymbol{B}_{\omega}^{\mathscr{F}^4_{m}}; \\
  \boldsymbol{B}_{\omega}^{\mathscr{F}^4_{l,m,n}}&=\boldsymbol{B}_{\omega}^{\mathscr{F}^4_{l}}\cup\boldsymbol{B}_{\omega}^{\mathscr{F}^4_{m}}\cup \boldsymbol{B}_{\omega}^{\mathscr{F}^4_{n}}.
\end{align*}

\begin{proposition}\label{proposition-2.1}
Let $\varepsilon$ be an injective monoid endomorphism of $\boldsymbol{B}_{\omega}^{\mathscr{F}^4}$. If $(\boldsymbol{B}_{\omega}^{\mathscr{F}^4_{0,1,2}})\varepsilon\subseteq \boldsymbol{B}_{\omega}^{\mathscr{F}^4_{0,1,2}}$, then $\varepsilon$ is the identity transformation.
\end{proposition}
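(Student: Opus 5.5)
The approach is to restrict $\varepsilon$ to the subsemigroup $\boldsymbol{B}_{\omega}^{\mathscr{F}^4_{0,1,2}}$, identify the restriction with one of the endomorphisms $\alpha_{[k]}$ from \cite{Gutik-Serivka=2023}, use the elements with third coordinate $[3)$ to force $k=1$, and then conclude with the fixed-point criterion of \cite{Gutik-Mykhalenych=2022}.

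\emph{Step 1: the restriction.} The set $\boldsymbol{B}_{\omega}^{\mathscr{F}^4_{0,1,2}}$ is closed under the multiplication \eqref{eq-1.2}. Indeed, for $F_1,F_2\in\{[0),[1),[2)\}$ every set of the form $(n+F_1)\cap F_2$ or $F_1\cap(n+F_2)$ with $n\in\omega$ is again one of $[0),[1),[2)$. Hence this subsemigroup is canonically isomorphic to $\boldsymbol{B}_{\omega}^{\mathscr{F}^3}$, and it contains the identity $(0,0,[0))$. By hypothesis, $\varepsilon$ restricts to an injective monoid endomorphism of this copy of $\boldsymbol{B}_{\omega}^{\mathscr{F}^3}$. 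By Theorem~5 of \cite{Gutik-Serivka=2023} there is a positive integer $k$ with $\varepsilon|_{\boldsymbol{B}_{\omega}^{\mathscr{F}^4_{0,1,2}}}=\alpha_{[k]}$. In particular $(k,k,[0))=(1,1,[0))\varepsilon$ and $(2k-1,2k-1,[2))=(1,1,[2))\varepsilon$.

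\emph{Step 2: forcing $k=1$.} This is the main step. A direct computation with \eqref{eq-1.2} gives
\begin{equation*}
(0,0,[3))\cdot(1,1,[0))=(1,1,(-1+[3))\cap[0))=(1,1,[2)).
\end{equation*}
Since $\varepsilon$ maps idempotents to idempotents, $(0,0,[3))\varepsilon=(a,a,[q))$ for some $a\in\omega$ and $q\in\{0,1,2,3\}$. Applying $\varepsilon$ to the displayed identity yields
\begin{equation*}
(a,a,[q))\cdot(k,k,[0))=(2k-1,2k-1,[2)).
\end{equation*}
We split into two cases.
\begin{itemize}
\item If $a\leqslant k$, the left-hand side equals $(k,k,[\max\{q+a-k,0\}))$. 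Comparing first coordinates gives $k=2k-1$, so $k=1$.
\item If $a>k$, the left-hand side equals $(a,a,[q)\cap[a-k))$. Comparing coordinates gives $a=2k-1$ and $[q)\cap[a-k)=[2)$. In particular $a-k\leqslant 2$, so $k\leqslant 3$. I expect that checking these finitely many pairs $(k,a)$ forces $q=2$. Then $(0,0,[3))\varepsilon=(2k-1,2k-1,[2))=(1,1,[2))\varepsilon$, which contradicts the injectivity of $\varepsilon$.
\end{itemize}
If the case $a>k$ does not close this cleanly, the fallback is to use a second relation, for instance $(0,0,[3))\cdot(2,2,[0))=(2,2,[1))$, and compare it in the same way with the images $(2k,2k,[0))$ and $(2k,2k,[1))$ under $\alpha_{[k]}$.

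\emph{Step 3: conclusion.} Once $k=1$, the map $\alpha_{[1]}$ is the identity on $\boldsymbol{B}_{\omega}^{\mathscr{F}^3}$. Hence $\varepsilon$ fixes the non-idempotent element $(0,1,[0))$. By the result of \cite{Gutik-Mykhalenych=2022}, an injective endomorphism of $\boldsymbol{B}_{\omega}^{\mathscr{F}}$ that fixes a non-idempotent element $(i,j,[p))$ is the identity transformation. This completes the proof.
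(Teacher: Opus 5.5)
\textbf{The route works, but Step~2 contains a computational error that leaves the case $a>k$ unproved as written.} For $a>k$ we have $j_1=a\geqslant i_2=k$, so by \eqref{eq-1.2}
\begin{equation*}
(a,a,[q))\cdot(k,k,[0))=(a,a,[q)\cap(k-a+[0)))=(a,a,[q)).
\end{equation*}
The negative shift $k-a$ acts on the set $[0)$ of the \emph{second} factor, so $k-a+[0)\supseteq\omega$. Equivalently, $(a,a,[q))\preccurlyeq(k,k,[0))$. The product is therefore not $(a,a,[q)\cap[a-k))$.

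With your formula the case would not close. For $k=3$, $a=5$ you get $[q)\cap[2)=[2)$ for every $q\leqslant 2$, so ``forces $q=2$'' does not follow. With the correct formula the case is immediate: $(a,a,[q))=(2k-1,2k-1,[2))$ gives $(0,0,[3))\varepsilon=(1,1,[2))\varepsilon$, which contradicts injectivity. Your fallback relation would also have rescued it: since $a=2k-1<2k$, it yields $(2k,2k,[\max\{q-1,0\}))=(2k,2k,[1))$, so $q=2$. With this repair, Steps~1--3 form a complete proof.

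\textbf{Comparison with the paper.} Both arguments begin by identifying the restriction with $\alpha_{[k]}$ and end with the fixed-point criterion of \cite{Gutik-Mykhalenych=2022}. The paper then uses Proposition~4 of \cite{Gutik-Mykhalenych=2021} to know that $\boldsymbol{B}_{\omega}^{\mathscr{F}^4_{3}}$ maps into a single block, and splits into two cases:
\begin{itemize}
\item If the image lies in $\boldsymbol{B}_{\omega}^{\mathscr{F}^4_{1,2,3}}$, it applies Theorem~5 of \cite{Gutik-Serivka=2023} a second time, to the $\{1,2,3\}$-part. This fixes $(0,0,[2))$ and gives three fixed points.
\item If the image lies in $\boldsymbol{B}_{\omega}^{\mathscr{F}^4_{0}}$, it uses preservation of the natural order to get $(0,0,[3))\varepsilon=(s,s,[0))$ with $s\geqslant k+1$. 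It then derives a contradiction from the relation $(1,1,[0))\cdot(0,0,[3))=(1,1,[2))$, which is your relation with the factors commuted.
\end{itemize}
Your argument shows that this single relation, together with injectivity, already forces $k=1$ without knowing where the $[3)$-block goes. You therefore avoid the block-preservation result, the second application of Theorem~5, and the description of the natural order. The paper's case split yields a little more structural information, but none of it is needed for the statement.
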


\begin{proof}
Since $(\boldsymbol{B}_{\omega}^{\mathscr{F}^4_{0,1,2}})\varepsilon\subseteq \boldsymbol{B}_{\omega}^{\mathscr{F}^4_{0,1,2}}$, the restriction $\varepsilon{\upharpoonright}_{\boldsymbol{B}_{\omega}^{\mathscr{F}^4_{0,1,2}}}\colon \boldsymbol{B}_{\omega}^{\mathscr{F}^4_{0,1,2}}\to \boldsymbol{B}_{\omega}^{\mathscr{F}^4_{0,1,2}}$ of the endomorphism $\varepsilon$ is an injective monoid endomorphism. Hence by Theorem~5 of \cite{Gutik-Serivka=2023} it is determined by the formula
\begin{equation*}
  (i,j,[p))\varepsilon{\upharpoonright}_{\boldsymbol{B}_{\omega}^{\mathscr{F}^4_{0,1,2}}}=(i,j,[p))\alpha_{[k]}=
  \left\{
    \begin{array}{ll}
      (ki,kj,[p)), & \hbox{if~} p\in\{0,1\};\\
      (k(i+1)-1,k(j+1)-1,[2)), & \hbox{if~} p=2,
    \end{array}
  \right.
\end{equation*}
for some positive integer $k$ and any $i,j\in\omega$, because by Proposition~1 of \cite{Gutik-Mykhalenych=2021} the semigroup  $\boldsymbol{B}_{\omega}^{\mathscr{F}^{3}}$ and the subsemigroup $\boldsymbol{B}_{\omega}^{\mathscr{F}^4_{0,1,2}}$ of $\boldsymbol{B}_{\omega}^{\mathscr{F}^4}$ are isomorphic. This implies that
\begin{equation*}
  (0,0,[p))\varepsilon{\upharpoonright}_{\boldsymbol{B}_{\omega}^{\mathscr{F}^4_{0,1,2}}}=(0,0,[p))\varepsilon=(0,0,[p))
\end{equation*}
for $p\in\{0,1\}$.

Suppose that $(\boldsymbol{B}_{\omega}^{\mathscr{F}^4_{3}})\varepsilon\subseteq \boldsymbol{B}_{\omega}^{\mathscr{F}^4_{1,2,3}}$. Since $(0,0,[1))\varepsilon=(0,0,[1))$ we conclude that the restriction $\varepsilon{\upharpoonright}_{\boldsymbol{B}_{\omega}^{\mathscr{F}^4_{1,2,3}}}\colon \boldsymbol{B}_{\omega}^{\mathscr{F}^4_{1,2,3}}\to \boldsymbol{B}_{\omega}^{\mathscr{F}^4_{1,2,3}}$ of the endomorphism $\varepsilon$ is an injective monoid endomorphism of $\boldsymbol{B}_{\omega}^{\mathscr{F}^4_{1,2,3}}$, and hence by Theorem~5 of \cite{Gutik-Serivka=2023} it is determined by the formula
\begin{equation*}
  (i,j,[p))\varepsilon{\upharpoonright}_{\boldsymbol{B}_{\omega}^{\mathscr{F}^4_{1,2,3}}}=
  \left\{
    \begin{array}{ll}
      (k_1i,k_1j,[p)), & \hbox{if~} p\in\{1,2\};\\
      (k_1(i+1)-1,k_1(j+1)-1,[2)), & \hbox{if~} p=3,
    \end{array}
  \right.
\end{equation*}
for some positive integer $k_1$ and any $i,j\in\omega$. This and the above part of the proof imply that
\begin{equation*}
  (0,0,[p))\varepsilon{\upharpoonright}_{\boldsymbol{B}_{\omega}^{\mathscr{F}^4_{0,1,2}}}=(0,0,[p))\varepsilon=(0,0,[p))
\end{equation*}
for any $p\in\{0,1,2\}$. Therefore the endomorphism $\varepsilon$ has distinct three fixed points and hence by Theorem~2 of \cite{Gutik-Mykhalenych=2022} $\varepsilon$ is the identity transformation.

Suppose that $(\boldsymbol{B}_{\omega}^{\mathscr{F}^4_{3}})\varepsilon\subseteq \boldsymbol{B}_{\omega}^{\mathscr{F}^4_{0}}$. By the previous part of the proof we have that $(0,0,[2))\varepsilon{=}({k{-}1},{k{-}1},[2))$ for some positive integer $k$. Proposition~1.4.21 of \cite{Lawson=1998} state that a homomorhism of inverse semigroups  preserves the natural partial order and idempotents, and hence by the description of the natural partial order on semilattice of idempotents $E(\boldsymbol{B}_{\omega}^{\mathscr{F}^4})$ of the semigroup $\boldsymbol{B}_{\omega}^{\mathscr{F}^4}$ (see Proposition~3 in \cite{Gutik-Mykhalenych=2021}) by Lemma~2 of \cite{Gutik-Mykhalenych=2020} we get that $(0,0,[3))\varepsilon=(s,s,[0))$ for some positive integer $s\geqslant k+1$. Then we get that
\begin{equation*}
  ((1,1,[0))\cdot (0,0,[3)))\varepsilon=(1,1,[2))\varepsilon=(2k-1,2k-1,[2))
\end{equation*}
and
\begin{equation*}
  (1,1,[0))\varepsilon\cdot (0,0,[3))\varepsilon=(k,k,[0))\cdot(s,s,[0))=(s,s,[0)).
\end{equation*}
This contradicts that $\varepsilon$ is an endomorphism of the semigroup $\boldsymbol{B}_{\omega}^{\mathscr{F}^4}$. The obtained contradiction implies that $(\boldsymbol{B}_{\omega}^{\mathscr{F}^4_{3}})\varepsilon\nsubseteq \boldsymbol{B}_{\omega}^{\mathscr{F}^4_{0}}$. By Proposition~4 of \cite{Gutik-Mykhalenych=2021} we obtain that $(\boldsymbol{B}_{\omega}^{\mathscr{F}^4_{3}})\subseteq \boldsymbol{B}_{\omega}^{\mathscr{F}^4_{1,2,3}}$. Then by above part of the proof we have that $\varepsilon$ is the identity transformation of $\boldsymbol{B}_{\omega}^{\mathscr{F}^4}$.
\end{proof}

\begin{proposition}\label{proposition-2.2}
Let $\varepsilon$ be an injective monoid endomorphism of $\boldsymbol{B}_{\omega}^{\mathscr{F}^4}$. If $(\boldsymbol{B}_{\omega}^{\mathscr{F}^4_{1,2,3}})\varepsilon\subseteq \boldsymbol{B}_{\omega}^{\mathscr{F}^4_{1,2,3}}$, then $\varepsilon$ is the identity transformation.
\end{proposition}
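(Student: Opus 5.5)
The plan is to mirror the proof of Proposition~\ref{proposition-2.1}, working with the restriction to $\boldsymbol{B}_{\omega}^{\mathscr{F}^4_{1,2,3}}$.

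\emph{Step 1: the restriction is described by Theorem~5 of \cite{Gutik-Serivka=2023}.} By Proposition~1 of \cite{Gutik-Mykhalenych=2021}, the subsemigroup $\boldsymbol{B}_{\omega}^{\mathscr{F}^4_{1,2,3}}$ is isomorphic to $\boldsymbol{B}_{\omega}^{\mathscr{F}^3}$ via $(i,j,[p))\mapsto(i,j,[p-1))$. Its identity is $(0,0,[1))$.

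First I must check that $(0,0,[1))\varepsilon=(0,0,[1))$. Since $\varepsilon$ maps $\boldsymbol{B}_{\omega}^{\mathscr{F}^4_{1,2,3}}$ into itself, $(0,0,[1))\varepsilon$ is an idempotent $(s,s,[q))$ with $q\in\{1,2,3\}$. It lies below $(0,0,[0))\varepsilon=(0,0,[0))$. I will use injectivity together with the order embedding of chains. For $p\ge1$, the bicyclic copy $\boldsymbol{B}_{\omega}^{\mathscr{F}^4_{p}}$ has identity $(0,0,[p))$. If $(0,0,[1))\varepsilon\neq(0,0,[1))$, then the image of $(0,0,[1))$ is strictly below $(0,0,[1))$. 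Then the four-element chain $(0,0,[0))\succ(0,0,[1))\succ(0,0,[2))\succ(0,0,[3))$ is mapped to a chain whose top lies in the $\mathscr{F}^4_0$-copy. Applying Lemma~2 of \cite{Gutik-Mykhalenych=2020} and the argument from Proposition~\ref{proposition-2.1}, I then test the identity $(1,1,[0))\cdot(0,0,[p))=(1,1,[p))$ under $\varepsilon$, using $(1,1,[0))\varepsilon=(k,k,[0))$ by Theorem~5 applied to the bicyclic part. This should give a contradiction, since $(0,0,[p))\varepsilon$ must commute appropriately with the shift.

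I regard this as the main obstacle. If a direct contradiction fails, my fallback is this: $(0,0,[1))\varepsilon$ is an idempotent $e$, and the restriction is an injective endomorphism of a copy of $\boldsymbol{B}_{\omega}^{\mathscr{F}^3}$ into its local submonoid $e\boldsymbol{B}_{\omega}^{\mathscr{F}^4_{1,2,3}}e$. That submonoid is again isomorphic to some $\boldsymbol{B}_{\omega}^{\mathscr{F}^3}$ or $\boldsymbol{B}_{\omega}^{\mathscr{F}^2}$ copy by Proposition~1 of \cite{Gutik-Mykhalenych=2021}. Injectivity rules out the two-set case, and Theorem~5 then applies after composing with that isomorphism.

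\emph{Step 2: conclude as in Proposition~\ref{proposition-2.1}.} With the monoid property in hand, Theorem~5 of \cite{Gutik-Serivka=2023} gives
\begin{equation*}
(i,j,[p))\varepsilon=(k(i+1)-k+ (p-1)(k-1)\cdot 0 + ki - ki, \ldots)
\end{equation*}
Concretely, $(i,j,[p))\varepsilon=(ki,kj,[p))$ for $p\in\{1,2\}$ and $(k(i+1)-1,k(j+1)-1,[3))$ for $p=3$. In particular $(0,0,[1))$ and $(0,0,[2))$ are fixed. Together with the fixed identity $(0,0,[0))$, this gives three distinct fixed points. Theorem~2 of \cite{Gutik-Mykhalenych=2022} then yields that $\varepsilon$ is the identity.

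Alternatively, once $k$ is known, one checks that the non-idempotent $(1,0,[1))$ maps to $(k,0,[1))$. Comparing with $(1,0,[0))\varepsilon$ via $(1,0,[0))\cdot(0,0,[1))=(1,0,[1))$ forces consistency. Even so, the three-fixed-points criterion already suffices.
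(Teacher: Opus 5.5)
\textbf{There is a genuine gap.} Your Step~1, showing $(0,0,[1))\varepsilon=(0,0,[1))$, is the entire content of the proposition, and it is not proved.

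The hypothesis only makes $\boldsymbol{B}_{\omega}^{\mathscr{F}^4_{1,2,3}}$ invariant. So the restriction of $\varepsilon$ to this copy of $\boldsymbol{B}_{\omega}^{\mathscr{F}^3}$ is a priori an arbitrary injective, not necessarily monoid, endomorphism. In particular $(0,0,[1))\varepsilon$ could be $(k,k,[1))$ with $k>0$ (a shift), or $(m,m,[3))$ (a reflection swapping the $[1)$- and $[3)$-levels). Theorem~5 of \cite{Gutik-Serivka=2023} is unavailable until both are excluded, so Step~2 is only conditional.

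Your direct attempt has two problems:
\begin{itemize}
\item It rests on a wrong identity. By \eqref{eq-1.2}, $(1,1,[0))\cdot(0,0,[p))=(1,1,[p-1))$ for $p\geqslant 1$, not $(1,1,[p))$; compare \eqref{eq-2.11}.
\item Even with the correct identity, knowing only $(1,1,[0))\varepsilon=(k,k,[0))$ and that each $(0,0,[p))\varepsilon$ is some idempotent of $\boldsymbol{B}_{\omega}^{\mathscr{F}^4_{1,2,3}}$ gives no contradiction. ``This should give a contradiction'' is not an argument.
\end{itemize}

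The fallback does not work either:
\begin{itemize}
\item Proposition~1 of \cite{Gutik-Mykhalenych=2021} concerns shifting the family $\mathscr{F}$; it says nothing about local submonoids.
\item For $e=(s,s,[2))$, the local submonoid $e\boldsymbol{B}_{\omega}^{\mathscr{F}^4_{1,2,3}}e$ also contains $(s+1,s+1,[1))$. So it is not one of the copies you name.
\item Even when the local submonoid is isomorphic to $\boldsymbol{B}_{\omega}^{\mathscr{F}^3}$ (for $e=(s,s,[1))$ or $e=(s,s,[3))$), Theorem~5 only tells you that, up to conjugation by $\mathfrak{I}$, the restriction has the form $\alpha_{[k]}\circ\lambda^{s}$ or $\alpha_{[k]}\circ\lambda^{m}\circ\varpi_3$. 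It does not force $e=(0,0,[1))$; these cases still have to be killed.
\end{itemize}

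\textbf{How the paper closes this.} It invokes Theorem~1 of \cite{Gutik-Serivka=2026}, the description of all injective endomorphisms of $\boldsymbol{B}_{\omega}^{\mathscr{F}^3}$, to split into two cases.
\begin{itemize}
\item If $(\boldsymbol{B}_{\omega}^{\mathscr{F}^4_{1}})\varepsilon\subseteq\boldsymbol{B}_{\omega}^{\mathscr{F}^4_{1}}$, then every bicyclic copy $\boldsymbol{B}_{\omega}^{\mathscr{F}^4_{p}}$ is mapped into itself. For $p=0$ this holds because the identity is fixed. Hence $\boldsymbol{B}_{\omega}^{\mathscr{F}^4_{0,1,2}}$ is invariant and Proposition~\ref{proposition-2.1} applies. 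This disposes of a possible shift $k>0$ without ever showing directly that $(0,0,[1))$ is fixed.
\item If $(\boldsymbol{B}_{\omega}^{\mathscr{F}^4_{1}})\varepsilon\subseteq\boldsymbol{B}_{\omega}^{\mathscr{F}^4_{3}}$, the explicit formulas \eqref{eq-2.4}--\eqref{eq-2.6} show that $\varepsilon^2$ preserves every copy. So $\varepsilon^2$ is the identity by Proposition~\ref{proposition-2.1}. This forces $k(m+1)+m-1=0$, i.e.\ $m=0$ and $k=1$. Then the product $(0,0,[1))\cdot(0,0,[3))=(0,0,[3))$ is not preserved by $\varepsilon$, a contradiction.
\end{itemize}
Some case analysis of this kind, or an equivalent hands-on classification of the restriction, is the idea missing from your proposal.
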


\begin{proof}
The assumption of the proposition implies that the restriction $\varepsilon{\upharpoonright}_{\boldsymbol{B}_{\omega}^{\mathscr{F}^4_{1,2,3}}}\colon \boldsymbol{B}_{\omega}^{\mathscr{F}^4_{1,2,3}}\to \boldsymbol{B}_{\omega}^{\mathscr{F}^4_{1,2,3}}$ of the endomorphism $\varepsilon$ is an injective  endomorphism of the subsemigroup $\boldsymbol{B}_{\omega}^{\mathscr{F}^4_{1,2,3}}$ of the monoid $\boldsymbol{B}_{\omega}^{\mathscr{F}^4}$. By Theorem~1 of \cite{Gutik-Serivka=2026} we have that only one of the following conditions holds
\begin{equation*}
  (\boldsymbol{B}_{\omega}^{\mathscr{F}^4_{1}})\varepsilon{\upharpoonright}_{\boldsymbol{B}_{\omega}^{\mathscr{F}^4_{1,2,3}}}= (\boldsymbol{B}_{\omega}^{\mathscr{F}^4_{1}})\varepsilon\subseteq \boldsymbol{B}_{\omega}^{\mathscr{F}^4_{1}} \qquad \hbox{or} \qquad
  (\boldsymbol{B}_{\omega}^{\mathscr{F}^4_{1}})\varepsilon{\upharpoonright}_{\boldsymbol{B}_{\omega}^{\mathscr{F}^4_{1,2,3}}}= (\boldsymbol{B}_{\omega}^{\mathscr{F}^4_{1}})\varepsilon\subseteq \boldsymbol{B}_{\omega}^{\mathscr{F}^4_{3}}.
\end{equation*}

Suppose that the inclusion $(\boldsymbol{B}_{\omega}^{\mathscr{F}^4_{1}})\varepsilon{\upharpoonright}_{\boldsymbol{B}_{\omega}^{\mathscr{F}^4_{1,2,3}}}= (\boldsymbol{B}_{\omega}^{\mathscr{F}^4_{1}})\varepsilon\subseteq \boldsymbol{B}_{\omega}^{\mathscr{F}^4_{1}}$ holds. Proposition~1.4.21 of \cite{Lawson=1998} states that a homomorphism of semigroups  preserves idempotents and hence we have that $(0,0,[1))\varepsilon=(k,k,[1))$ for some non-negative integer $k$. By Proposition~1 of \cite{Gutik-Mykhalenych=2021} the semigroups $\boldsymbol{B}_{\omega}^{\mathscr{F}^{3}}$ and $\boldsymbol{B}_{\omega}^{\mathscr{F}^4_{1,2,3}}$ are isomorphic by the mapping $\mathfrak{I}\colon \boldsymbol{B}_{\omega}^{\mathscr{F}^{3}}\to \boldsymbol{B}_{\omega}^{\mathscr{F}^4_{1,2,3}}$, $(i,j[p))\mapsto (i,j,[p+1))$ for any $i,j\in\omega$ and $p\in\{0,1,2\}$. Then the mapping $\varepsilon^{\prime}=\mathfrak{I}\circ \varepsilon{\upharpoonright}_{\boldsymbol{B}_{\omega}^{\mathscr{F}^4_{1,2,3}}} \circ\mathfrak{I}^{-1}$ is an injective endomorphism of the semigroup $\boldsymbol{B}_{\omega}^{\mathscr{F}^{3}}$ such that $(0,0,[0))\varepsilon^{\prime}= (k,k,[0))$. By Theorem~1 of \cite{Gutik-Serivka=2026} we obtain that $\varepsilon^{\prime}=\lambda^k$, where the map $\lambda\colon \boldsymbol{B}_{\omega}^{\mathscr{F}^3}\to \boldsymbol{B}_{\omega}^{\mathscr{F}^3}$ is defined by the formula
  \begin{equation}\label{eq-2.1}
  (i,j,[p))\lambda=(i+1,j+1,[p)), \quad  i,j\in\omega, \; p\in\{0,1,2\}.
  \end{equation}
Since by Proposition~3 of \cite{Gutik-Mykhalenych=2020} the subsemigroup $\boldsymbol{B}_{\omega}^{\mathscr{F}^4_{p}}$ of $\boldsymbol{B}_{\omega}^{\mathscr{F}^{4}}$ is isomorphic to the bicyclic semigroup, Proposition~4 of \cite{Gutik-Mykhalenych=2021} implies that
\begin{equation*}
  (\boldsymbol{B}_{\omega}^{\mathscr{F}^4_{p}})\varepsilon{\upharpoonright}_{\boldsymbol{B}_{\omega}^{\mathscr{F}^4_{1,2,3}}}= (\boldsymbol{B}_{\omega}^{\mathscr{F}^4_{p}})\varepsilon\subseteq \boldsymbol{B}_{\omega}^{\mathscr{F}^4_{p}},
\end{equation*}
for any $p\in\{1,2,3\}$. Then $(\boldsymbol{B}_{\omega}^{\mathscr{F}^4_{0,1,2}})\varepsilon\subseteq \boldsymbol{B}_{\omega}^{\mathscr{F}^4_{0,1,2}}$ and by Proposition~\ref{proposition-2.1} the endomorphism $\varepsilon$ is the identity transformation.

Suppose that the inclusion $(\boldsymbol{B}_{\omega}^{\mathscr{F}^4_{1}})\varepsilon{\upharpoonright}_{\boldsymbol{B}_{\omega}^{\mathscr{F}^4_{1,2,3}}}\subseteq \boldsymbol{B}_{\omega}^{\mathscr{F}^4_{3}}$ holds. By Proposition~1.4.21 of \cite{Lawson=1998} a homomorphic image of an idempotent is an idempotent, as well, and hence by Lemma~2 of \cite{Gutik-Mykhalenych=2020} there exists a non-negative integer $m$ such that $(0,0,[1))\varepsilon=(m,m,[3))$. By Proposition~1 of \cite{Gutik-Mykhalenych=2021} the subsemigroups $\boldsymbol{B}_{\omega}^{\mathscr{F}^{3}}$ and $\boldsymbol{B}_{\omega}^{\mathscr{F}^4_{1,2,3}}$ are isomorphic by the mapping $\mathfrak{I}\colon \boldsymbol{B}_{\omega}^{\mathscr{F}^{3}}\to \boldsymbol{B}_{\omega}^{\mathscr{F}^4_{1,2,3}}$, $(i,j[p))\mapsto (i,j,[p+1))$ for any $i,j\in\omega$ and $p\in\{0,1,2\}$. Then the mapping $\varepsilon^{\prime}=\mathfrak{I}\circ \varepsilon{\upharpoonright}_{\boldsymbol{B}_{\omega}^{\mathscr{F}^4_{1,2,3}}} \circ\mathfrak{I}^{-1}$ is an injective endomorphism of the semigroup $\boldsymbol{B}_{\omega}^{\mathscr{F}^{3}}$ such that $(0,0,[0))\varepsilon^{\prime}= (m,m,[2))$. Again we use Theorem~1 of \cite{Gutik-Serivka=2026} and get that $\varepsilon^{\prime}=\alpha_{[k]}\circ\lambda^m\circ\varpi_3$ for some positive integer $k$, where the endomorphisms $\lambda$, $\alpha_{[k]}$, and $\varpi_3$ are defined by the formulae \eqref{eq-2.1}, \eqref{eq-2.2}, and \eqref{eq-2.3},
\begin{equation}\label{eq-2.2}
  (i,j,[p))\alpha_{[k]}=
  \left\{
    \begin{array}{ll}
      (ki,kj,[p)), & \hbox{if~} p\in\{0,1\};\\
      (k(i+1)-1,k(j+1)-1,[2)), & \hbox{if~} p=2,
    \end{array}
  \right.
\end{equation}
 \begin{equation}\label{eq-2.3}
  (i,j,[p))\varpi_3=(i+p,j+p,[2-p)),
  \end{equation}
$i,j\in\omega$, $p\in\{0,1,2\}$, respectively.  Then we obtain that
\begin{align*}
  (i,j,[0))\varepsilon^{\prime} &=(i,j,[0))(\alpha_{[k]}\circ\lambda^m\circ\varpi_3)=\\
  &=(ki,kj,[0))(\lambda^m\circ\varpi_3)= \\
  &=(ki+m,kj+m,[0))\varpi_3=\\
  &=(ki+m,kj+m,[2)),
\end{align*}
\begin{align*}
  (i,j,[1))\varepsilon^{\prime} &=(i,j,[1))(\alpha_{[k]}\circ\lambda^m\circ\varpi_3)=\\
  &=(ki,kj,[1))(\lambda^m\circ\varpi_3)= \\
  &=(ki+m,kj+m,[1))\varpi_3=\\
  &=(ki+m,kj+m,[1)),
\end{align*}
\begin{align*}
  (i,j,[2))\varepsilon^{\prime} &=(i,j,[2))(\alpha_{[k]}\circ\lambda^m\circ\varpi_3)=\\
  &=(k(i+1)-1,k(j+1)-1,[2))(\lambda^m\circ\varpi_3)= \\
  &=(k(i+1)+m-1,k(j+1)+m-1,[2))\varpi_3=\\
  &=(k(i+1)+m-1,k(j+1)+m-1,[0)),
\end{align*}
for all $i,j\in\omega$.

Since $\varepsilon{\upharpoonright}_{\boldsymbol{B}_{\omega}^{\mathscr{F}^4_{1,2,3}}}=\mathfrak{I}^{-1}\circ \varepsilon^{\prime} \circ\mathfrak{I}$ we get that
\begin{align}\label{eq-2.4}
  (i,j,[1))\varepsilon{\upharpoonright}_{\boldsymbol{B}_{\omega}^{\mathscr{F}^4_{1,2,3}}}&= (i,j,[1))(\mathfrak{I}^{-1}\circ \varepsilon^{\prime} \circ\mathfrak{I})=\nonumber \\
  &=(i,j,[0))(\varepsilon^{\prime} \circ\mathfrak{I})=\nonumber\\
  &=(ki+m,kj+m,[2))\mathfrak{I}=\\
  &=(ki+m,kj+m,[3)),\nonumber
\end{align}
\begin{align}\label{eq-2.5}
  (i,j,[2))\varepsilon{\upharpoonright}_{\boldsymbol{B}_{\omega}^{\mathscr{F}^4_{1,2,3}}}&= (i,j,[2))(\mathfrak{I}^{-1}\circ \varepsilon^{\prime} \circ\mathfrak{I})=\nonumber \\
  &=(i,j,[1))(\varepsilon^{\prime} \circ\mathfrak{I})=\nonumber\\
  &=(ki+m,kj+m,[1))\mathfrak{I}=\\
  &=(ki+m,kj+m,[2)),\nonumber
\end{align}
\begin{align}\label{eq-2.6}
  (i,j,[3))\varepsilon{\upharpoonright}_{\boldsymbol{B}_{\omega}^{\mathscr{F}^4_{1,2,3}}}&= (i,j,[3))(\mathfrak{I}^{-1}\circ \varepsilon^{\prime} \circ\mathfrak{I})=\nonumber \\
  &=(i,j,[2))(\varepsilon^{\prime} \circ\mathfrak{I})=\nonumber\\
  &=(k(i+1)+m-1,k(j+1)+m-1,[0))\mathfrak{I}=\\
  &=(k(i+1)+m-1,k(j+1)+m-1,[1)),\nonumber
\end{align}
for all $i,j\in\omega$.

Equalities \eqref{eq-2.4} and \eqref{eq-2.6} imply that
\begin{align}\label{eq-2.7}
  (0,0,[1))\varepsilon^2 &=(0,0,[1))\big(\varepsilon{\upharpoonright}_{\boldsymbol{B}_{\omega}^{\mathscr{F}^4_{1,2,3}}}\circ \varepsilon{\upharpoonright}_{\boldsymbol{B}_{\omega}^{\mathscr{F}^4_{1,2,3}}}\big)= \nonumber \\
  &=(m,m,[3))\varepsilon{\upharpoonright}_{\boldsymbol{B}_{\omega}^{\mathscr{F}^4_{1,2,3}}}=\\
  &=(k(m+1)+m-1,k(m+1)+m-1,[1)). \nonumber
\end{align}
By equalities \eqref{eq-2.4}--\eqref{eq-2.6}  we have that $(\boldsymbol{B}_{\omega}^{\mathscr{F}^4_{p}})\varepsilon^2\subseteq \boldsymbol{B}_{\omega}^{\mathscr{F}^4_{p}}$ for any $p\in\{1,2,3\}$.  Since $(\boldsymbol{B}_{\omega}^{\mathscr{F}^4_{0}})\varepsilon^2\subseteq \boldsymbol{B}_{\omega}^{\mathscr{F}^4_{0}}$ the assumption of Proposition~\ref{proposition-2.1} holds for the monoid endomorphism $\varepsilon^2$, and hence we have that $(0,0,[1))\varepsilon^2=(0,0,[1))$. Then equality \eqref{eq-2.7} implies that $k(m+1)+m-1=0$. Since $m\in\omega$ and $k\geqslant 1$, we conclude that $m=0$ and $k=1$. By equalities \eqref{eq-2.4}--\eqref{eq-2.6} we get that
\begin{align*}
  (0,0,[1))\varepsilon&=(0,0,[3)); \\
  (0,0,[2))\varepsilon&=(0,0,[2)); \\
  (0,0,[3))\varepsilon&=(0,0,[1)),
\end{align*}
which imply that
\begin{align*}
  ((0,0,[1))\cdot(0,0,[3)))\varepsilon          &=(0,0,[3))\varepsilon=(0,0,[1)); \\
  (0,0,[1))\varepsilon\cdot(0,0,[3))\varepsilon &=(0,0,[3))\cdot(0,0,[1))=(0,0,[3)).
\end{align*}
The last two equalities contradict the assumption that $\varepsilon$ is an endomorphism of the semigroup  $\boldsymbol{B}_{\omega}^{\mathscr{F}^4}$. The obtained contradiction implies that the inclusion $(\boldsymbol{B}_{\omega}^{\mathscr{F}^4_{1}})\varepsilon{\upharpoonright}_{\boldsymbol{B}_{\omega}^{\mathscr{F}^4_{1,2,3}}}\subseteq \boldsymbol{B}_{\omega}^{\mathscr{F}^4_{1}}$ holds, and hence the statement of the proposition holds.
\end{proof}

\begin{proposition}\label{proposition-2.3}
There exists no an injective monoid endomorphism of the semigroup $\boldsymbol{B}_{\omega}^{\mathscr{F}^4}$ with the following properties: $(\boldsymbol{B}_{\omega}^{\mathscr{F}^4_{0,1,2}})\varepsilon\nsubseteq \boldsymbol{B}_{\omega}^{\mathscr{F}^4_{0,1,2}}$,
$(\boldsymbol{B}_{\omega}^{\mathscr{F}^4_{1,2,3}})\varepsilon\nsubseteq \boldsymbol{B}_{\omega}^{\mathscr{F}^4_{1,2,3}}$, and $(\boldsymbol{B}_{\omega}^{\mathscr{F}^4_{1}})\varepsilon\subseteq \boldsymbol{B}_{\omega}^{\mathscr{F}^4_{0}}$.
\end{proposition}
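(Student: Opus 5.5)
The plan is to derive a contradiction with the injectivity of $\varepsilon$ by exhibiting two distinct elements that $\varepsilon$ sends to the same element. Suppose $\varepsilon$ is an injective monoid endomorphism with the three listed properties.

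\emph{Step 1: locate the images of the bicyclic copies.} Each $\boldsymbol{B}_{\omega}^{\mathscr{F}^4_{p}}$ is isomorphic to the bicyclic semigroup (Proposition~3 of \cite{Gutik-Mykhalenych=2020}). Hence by Proposition~4 of \cite{Gutik-Mykhalenych=2021} its image under $\varepsilon$ lies in a single $\boldsymbol{B}_{\omega}^{\mathscr{F}^4_{q}}$. Since $\varepsilon$ is a monoid endomorphism, $(0,0,[0))\varepsilon=(0,0,[0))$, so $(\boldsymbol{B}_{\omega}^{\mathscr{F}^4_{0}})\varepsilon\subseteq \boldsymbol{B}_{\omega}^{\mathscr{F}^4_{0}}$. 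By hypothesis $(\boldsymbol{B}_{\omega}^{\mathscr{F}^4_{1}})\varepsilon\subseteq \boldsymbol{B}_{\omega}^{\mathscr{F}^4_{0}}$. Because $(\boldsymbol{B}_{\omega}^{\mathscr{F}^4_{0,1,2}})\varepsilon\nsubseteq \boldsymbol{B}_{\omega}^{\mathscr{F}^4_{0,1,2}}$, the only remaining possibility is $(\boldsymbol{B}_{\omega}^{\mathscr{F}^4_{2}})\varepsilon\subseteq \boldsymbol{B}_{\omega}^{\mathscr{F}^4_{3}}$. Since images of idempotents are idempotents (Proposition~1.4.21 of \cite{Lawson=1998} and Lemma~2 of \cite{Gutik-Mykhalenych=2020}), we may write
\begin{equation*}
(0,0,[2))\varepsilon=(t,t,[3)), \qquad (1,1,[0))\varepsilon=(c,c,[0))
\end{equation*}
for some $t,c\in\omega$.

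\emph{Step 2: two products in the domain.} By formula \eqref{eq-1.2} we have
\begin{equation*}
(0,0,[2))\cdot(1,1,[0))=(1,1,(-1+[2))\cap[0))=(1,1,[1)),
\end{equation*}
\begin{equation*}
(0,0,[1))\cdot(1,1,[0))=(1,1,[0)).
\end{equation*}
The second identity is not needed below; the argument uses only the first.

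\emph{Step 3: compute images and compare.} Apply $\varepsilon$ to the first identity, giving $(1,1,[1))\varepsilon=(t,t,[3))\cdot(c,c,[0))$. Since $(1,1,[1))\in\boldsymbol{B}_{\omega}^{\mathscr{F}^4_{1}}$, this element must lie in $\boldsymbol{B}_{\omega}^{\mathscr{F}^4_{0}}$.
\begin{itemize}
\item If $t>c$, formula \eqref{eq-1.2} gives the product $(t,t,[3)\cap(c-t+[0)))=(t,t,[3))$, which lies in $\boldsymbol{B}_{\omega}^{\mathscr{F}^4_{3}}$. This is a contradiction.
\item Hence $t\leqslant c$. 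Then the product is $(c,c,(t-c+[3))\cap[0))$. Its third coordinate must be $[0)$, so the product equals $(c,c,[0))$.
\end{itemize}
Therefore $(1,1,[1))\varepsilon=(c,c,[0))=(1,1,[0))\varepsilon$, while $(1,1,[1))\neq(1,1,[0))$. This contradicts the injectivity of $\varepsilon$, and the proposition follows.

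\emph{Main obstacle.} The delicate point is Step~1. It must be justified carefully that the image of each bicyclic piece lies entirely in one $\boldsymbol{B}_{\omega}^{\mathscr{F}^4_{q}}$, so that the first hypothesis forces $\boldsymbol{B}_{\omega}^{\mathscr{F}^4_{2}}$ into $\boldsymbol{B}_{\omega}^{\mathscr{F}^4_{3}}$. The rest is a direct computation with \eqref{eq-1.2}. If a gap appeared there, I would instead compare $(0,0,[2))\varepsilon$ with $(0,0,[1))\varepsilon$ using the natural partial order on $E(\boldsymbol{B}_{\omega}^{\mathscr{F}^4})$ (Proposition~3 of \cite{Gutik-Mykhalenych=2021}).
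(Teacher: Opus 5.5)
Your proof is correct, and it takes a genuinely different and much more economical route than the paper.

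Both arguments start the same way: Proposition~4 of \cite{Gutik-Mykhalenych=2021} gives $(\boldsymbol{B}_{\omega}^{\mathscr{F}^4_{2}})\varepsilon\subseteq\boldsymbol{B}_{\omega}^{\mathscr{F}^4_{3}}$. The paper then splits into four cases according to where $\boldsymbol{B}_{\omega}^{\mathscr{F}^4_{3}}$ is sent:
\begin{itemize}
\item In cases $(ii)$--$(iv)$ it observes that $\varepsilon^2$ satisfies the hypothesis of Proposition~\ref{proposition-2.1}. So $\varepsilon^2$ would be the identity, which is incompatible with $(\boldsymbol{B}_{\omega}^{\mathscr{F}^4_{1}})\varepsilon^2\subseteq\boldsymbol{B}_{\omega}^{\mathscr{F}^4_{0}}$.
\item In case $(i)$ it invokes the classifications of injective endomorphisms of $\boldsymbol{B}_{\omega}^{\mathscr{F}^2}$ (Theorem~1 of \cite{Gutik-Pozdniakova=2022} and Theorem~2 of \cite{Gutik-Serivka=2025}). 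These give explicit formulas on $\boldsymbol{B}_{\omega}^{\mathscr{F}^4_{0,1}}$ and $\boldsymbol{B}_{\omega}^{\mathscr{F}^4_{2,3}}$, and it then compares the images of a product.
\end{itemize}

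You need none of this: not the location of $(\boldsymbol{B}_{\omega}^{\mathscr{F}^4_{3}})\varepsilon$, not Proposition~\ref{proposition-2.1} and the heavy results behind it, and no classification theorem. You use the single relation $(0,0,[2))\cdot(1,1,[0))=(1,1,[1))$ together with an elementary fact: a product of an idempotent of $\boldsymbol{B}_{\omega}^{\mathscr{F}^4_{0}}$ and an idempotent of $\boldsymbol{B}_{\omega}^{\mathscr{F}^4_{3}}$ lies in $\boldsymbol{B}_{\omega}^{\mathscr{F}^4_{0}}$ only when it equals the $\boldsymbol{B}_{\omega}^{\mathscr{F}^4_{0}}$-factor. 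This forces $(1,1,[1))\varepsilon=(1,1,[0))\varepsilon$, and injectivity disposes of all four cases at once. The paper uses the same relation, in a slightly different way, in case $(iv)$ of Proposition~\ref{proposition-2.6}.

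Your choice of product is also what actually closes the paper's case $(i)$. There the paper multiplies $(1,1,[1))$ by $(0,0,[2))$ and writes the result as $(1,1,[2))$. By \eqref{eq-1.2} this product equals $(1,1,[1))$, as it must, since the paper invokes $(1,1,[1))\preccurlyeq(0,0,[2))$ just before. So that computation yields no contradiction, and replacing $(1,1,[1))$ by $(1,1,[0))$, as you do, repairs it.
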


\begin{proof}
Suppose to the contrary that there exists an endomorphism $\varepsilon$ with the above described properties. Since for any $s\in\{0,1,2,3\}$ the subsemigroup $\boldsymbol{B}_{\omega}^{\mathscr{F}^4_{s}}$ of $\boldsymbol{B}_{\omega}^{\mathscr{F}^4}$ is isomorphic to the bicyclic semigroup, Proposition~4 of \cite{Gutik-Mykhalenych=2021} and the injectivity of $\varepsilon$ impliy that for any $p\in\{0,1,2,3\}$ there exists $q\in \{0,1,2,3\}$ such that $(\boldsymbol{B}_{\omega}^{\mathscr{F}^4_{p}})\varepsilon\subseteq \boldsymbol{B}_{\omega}^{\mathscr{F}^4_{q}}$. Then the inclusions $(\boldsymbol{B}_{\omega}^{\mathscr{F}^4_{0}})\varepsilon\subseteq \boldsymbol{B}_{\omega}^{\mathscr{F}^4_{0}}$ and $(\boldsymbol{B}_{\omega}^{\mathscr{F}^4_{1}})\varepsilon\subseteq \boldsymbol{B}_{\omega}^{\mathscr{F}^4_{0}}$ imply that only one of the following cases holds:
\begin{itemize}
  \item[$(i)$]   $(\boldsymbol{B}_{\omega}^{\mathscr{F}^4_{2}})\varepsilon\subseteq \boldsymbol{B}_{\omega}^{\mathscr{F}^4_{3}}$ and $(\boldsymbol{B}_{\omega}^{\mathscr{F}^4_{3}})\varepsilon\subseteq \boldsymbol{B}_{\omega}^{\mathscr{F}^4_{3}}$;
  \item[$(ii)$]  $(\boldsymbol{B}_{\omega}^{\mathscr{F}^4_{2}})\varepsilon\subseteq \boldsymbol{B}_{\omega}^{\mathscr{F}^4_{3}}$ and $(\boldsymbol{B}_{\omega}^{\mathscr{F}^4_{3}})\varepsilon\subseteq \boldsymbol{B}_{\omega}^{\mathscr{F}^4_{2}}$;
  \item[$(iii)$] $(\boldsymbol{B}_{\omega}^{\mathscr{F}^4_{2}})\varepsilon\subseteq \boldsymbol{B}_{\omega}^{\mathscr{F}^4_{3}}$ and $(\boldsymbol{B}_{\omega}^{\mathscr{F}^4_{3}})\varepsilon\subseteq \boldsymbol{B}_{\omega}^{\mathscr{F}^4_{1}}$;
  \item[$(iv)$]  $(\boldsymbol{B}_{\omega}^{\mathscr{F}^4_{2}})\varepsilon\subseteq \boldsymbol{B}_{\omega}^{\mathscr{F}^4_{3}}$ and $(\boldsymbol{B}_{\omega}^{\mathscr{F}^4_{3}})\varepsilon\subseteq \boldsymbol{B}_{\omega}^{\mathscr{F}^4_{0}}$.
\end{itemize}

Suppose that case $(i)$ holds. Since  the restriction $\varepsilon{\upharpoonright}_{\boldsymbol{B}_{\omega}^{\mathscr{F}^4_{0,1}}}\colon \boldsymbol{B}_{\omega}^{\mathscr{F}^4_{0,1}}\to \boldsymbol{B}_{\omega}^{\mathscr{F}^4_{0,1}}$ of the endomorphism $\varepsilon$ is an injective monoid endomorphism and by Proposition~1 of \cite{Gutik-Mykhalenych=2021} the semigroup $\boldsymbol{B}_{\omega}^{\mathscr{F}^2}$ is isomorphic to the subsemigroup $\boldsymbol{B}_{\omega}^{\mathscr{F}^4_{0,1}}$ of $\boldsymbol{B}_{\omega}^{\mathscr{F}^4}$, Theorem~1 of \cite{Gutik-Pozdniakova=2022} implies that there exist a positive integer $k$ and $p\in\{1,\ldots,k-1\}$ such that
\begin{equation}\label{eq-2.8}
  (i,j,[s))\varepsilon{\upharpoonright}_{\boldsymbol{B}_{\omega}^{\mathscr{F}^4_{0.1}}}=(i,j,[s))\varepsilon=
  \left\{
    \begin{array}{ll}
      (ki,kj,[0)),     & \hbox{if~} s=0;\\
      (ki+p,kj+p,[0)), & \hbox{if~} s=1.
    \end{array}
  \right.
\end{equation}
Again, since  the restriction $\varepsilon{\upharpoonright}_{\boldsymbol{B}_{\omega}^{\mathscr{F}^4_{2,3}}}\colon \boldsymbol{B}_{\omega}^{\mathscr{F}^4_{2,3}}\to \boldsymbol{B}_{\omega}^{\mathscr{F}^4_{2,3}}$ of the endomorphism $\varepsilon$ is an injective  endomorphism and by Proposition~1 of \cite{Gutik-Mykhalenych=2021} the semigroup $\boldsymbol{B}_{\omega}^{\mathscr{F}^2}$ is isomorphic to the subsemigroup $\boldsymbol{B}_{\omega}^{\mathscr{F}^4_{2,3}}$ of $\boldsymbol{B}_{\omega}^{\mathscr{F}^4}$, Theorem~2 of \cite{Gutik-Serivka=2025} implies that there exist a non-negative integer $n_1$, a positive integer $k_1$, and $p_1\in\{1,\ldots,k-1\}$ such that
\begin{equation}\label{eq-2.9}
  (i,j,[s))\varepsilon{\upharpoonright}_{\boldsymbol{B}_{\omega}^{\mathscr{F}^4_{2,3}}}=(i,j,[s))\varepsilon=
  \left\{
    \begin{array}{ll}
      (k_1i+p_1+n_1,k_1j+p_1+n_1,[3)), & \hbox{if~} s=2;\\
      (k_1i+n_1,k_1j+n_1,[3)),         & \hbox{if~} s=3.
    \end{array}
  \right.
\end{equation}

By Proposition~3 of \cite{Gutik-Mykhalenych=2021} we have that $(1,1,[1))\preccurlyeq(0,0,[2))$. Proposition~1.4.21 of \cite{Lawson=1998} implies that a homomorphism of inverse semigroups preserves the natural partial order, and hence by equalities \eqref{eq-2.8} and \eqref{eq-2.9} we get that
\begin{equation*}
  (k+p,k+p,[0))=(1,1,[1))\varepsilon\preccurlyeq(0,0,[2))\varpi=(p_1+n_1,p_1+n_1,[3)).
\end{equation*}
Then lemma~5 of \cite{Gutik-Mykhalenych=2020} implies that $k+p\geqslant p_1+n_1$ and $0\geqslant p_1+n_1+3$. Hence we have that  $k+p\geqslant p_1+n_1+3$. Then by equalities \eqref{eq-2.8} and \eqref{eq-2.9} we obtain that
\begin{align*}
  ((1,1,[1))\cdot(0,0,[2))\varepsilon&=(1,1,[2))\varepsilon= \\
   &=(k_1p_1+n_1,k_1p_1+n_1,[3))
\end{align*}
and
\begin{align*}
  (1,1,[1))\varepsilon\cdot(0,0,[2))\varepsilon&=(k+p,k+p,[0))\cdot(n_1,n_1,[3))= \\
   &=(k+p,k+p,[0)\cap (n_1-k-p+[3)))=\\
   &=(k+p,k+p,[0)),
\end{align*}
because $k+p\geqslant p_1+n_1+3$. These equalities contradict the assumption that $\varepsilon$ is an endomorphism.

We observe that $\varepsilon^2$ is an injective monoid endomorphism of the semigroup $\boldsymbol{B}_{\omega}^{\mathscr{F}^4}$ as a composition of injective monoid endomorphisms.

Suppose that case $(ii)$ holds. Then we have that $(\boldsymbol{B}_{\omega}^{\mathscr{F}^4_{0,1,2}})\varepsilon^2\subseteq \boldsymbol{B}_{\omega}^{\mathscr{F}^4_{0,2}} \subseteq \boldsymbol{B}_{\omega}^{\mathscr{F}^4_{0,1,2}}$ and hence by Proposition~\ref{proposition-2.1} we get that $\varepsilon^2$ is the identity transformation of the semigroup $\boldsymbol{B}_{\omega}^{\mathscr{F}^4}$. But we have that $(\boldsymbol{B}_{\omega}^{\mathscr{F}^4_{1}})\varepsilon^2\subseteq \boldsymbol{B}_{\omega}^{\mathscr{F}^4_{0}}$, a contradiction.

Suppose that case $(iii)$ holds. Then we have that $(\boldsymbol{B}_{\omega}^{\mathscr{F}^4_{0,1,2}})\varepsilon^2\subseteq \boldsymbol{B}_{\omega}^{\mathscr{F}^4_{0,1}} \subseteq \boldsymbol{B}_{\omega}^{\mathscr{F}^4_{0,1,2}}$ and hence by Proposition~\ref{proposition-2.1} the mapping  $\varepsilon^2$ is the identity transformation of the semigroup $\boldsymbol{B}_{\omega}^{\mathscr{F}^4}$. But we have that $(\boldsymbol{B}_{\omega}^{\mathscr{F}^4_{1}})\varepsilon^2\subseteq \boldsymbol{B}_{\omega}^{\mathscr{F}^4_{0}}$ and $(\boldsymbol{B}_{\omega}^{\mathscr{F}^4_{2}})\varepsilon^2\subseteq \boldsymbol{B}_{\omega}^{\mathscr{F}^4_{1}}$, a contradiction.

Suppose that case $(iv)$ holds. Again, since $(\boldsymbol{B}_{\omega}^{\mathscr{F}^4_{0,1,2}})\varepsilon^2\subseteq \boldsymbol{B}_{\omega}^{\mathscr{F}^4_{0}}\subseteq \boldsymbol{B}_{\omega}^{\mathscr{F}^4_{0,1,2}}$, Proposition~\ref{proposition-2.1} implies that the mapping  $\varepsilon^2$ is the identity transformation of the semigroup $\boldsymbol{B}_{\omega}^{\mathscr{F}^4}$. But we have that $(\boldsymbol{B}_{\omega}^{\mathscr{F}^4_{1}})\varepsilon^2\subseteq \boldsymbol{B}_{\omega}^{\mathscr{F}^4_{0}}$, $(\boldsymbol{B}_{\omega}^{\mathscr{F}^4_{2}})\varepsilon^2\subseteq \boldsymbol{B}_{\omega}^{\mathscr{F}^4_{0}}$, and $(\boldsymbol{B}_{\omega}^{\mathscr{F}^4_{3}})\varepsilon^2\subseteq \boldsymbol{B}_{\omega}^{\mathscr{F}^4_{0}}$, a contradiction.

The obtained contradictions imply the statement of the proposition.
\end{proof}

\begin{proposition}\label{proposition-2.4}
There exists no an injective monoid endomorphism of the semigroup $\boldsymbol{B}_{\omega}^{\mathscr{F}^4}$ with the following properties: $(\boldsymbol{B}_{\omega}^{\mathscr{F}^4_{0,1,2}})\varepsilon\nsubseteq \boldsymbol{B}_{\omega}^{\mathscr{F}^4_{0,1,2}}$,
$(\boldsymbol{B}_{\omega}^{\mathscr{F}^4_{1,2,3}})\varepsilon\nsubseteq \boldsymbol{B}_{\omega}^{\mathscr{F}^4_{1,2,3}}$, and $(\boldsymbol{B}_{\omega}^{\mathscr{F}^4_{1}})\varepsilon\subseteq \boldsymbol{B}_{\omega}^{\mathscr{F}^4_{1}}$.
\end{proposition}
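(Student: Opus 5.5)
The plan is to pin down where each bicyclic piece $\boldsymbol{B}_{\omega}^{\mathscr{F}^4_{p}}$ goes under $\varepsilon$, and then to apply Proposition~\ref{proposition-2.1} to $\varepsilon^2$, as in cases $(ii)$--$(iv)$ of the proof of Proposition~\ref{proposition-2.3}.

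Suppose to the contrary that such an $\varepsilon$ exists. As in the proof of Proposition~\ref{proposition-2.3}, each $\boldsymbol{B}_{\omega}^{\mathscr{F}^4_{p}}$ is isomorphic to the bicyclic semigroup, so Proposition~4 of \cite{Gutik-Mykhalenych=2021} and the injectivity of $\varepsilon$ give, for every $p\in\{0,1,2,3\}$, some $q\in\{0,1,2,3\}$ with $(\boldsymbol{B}_{\omega}^{\mathscr{F}^4_{p}})\varepsilon\subseteq \boldsymbol{B}_{\omega}^{\mathscr{F}^4_{q}}$. Since $\varepsilon$ is a monoid endomorphism, $(0,0,[0))\varepsilon=(0,0,[0))$, and therefore $(\boldsymbol{B}_{\omega}^{\mathscr{F}^4_{0}})\varepsilon\subseteq \boldsymbol{B}_{\omega}^{\mathscr{F}^4_{0}}$. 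By hypothesis we also have $(\boldsymbol{B}_{\omega}^{\mathscr{F}^4_{1}})\varepsilon\subseteq \boldsymbol{B}_{\omega}^{\mathscr{F}^4_{1}}$.

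Next I determine the images of $\boldsymbol{B}_{\omega}^{\mathscr{F}^4_{2}}$ and $\boldsymbol{B}_{\omega}^{\mathscr{F}^4_{3}}$.

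First, $(\boldsymbol{B}_{\omega}^{\mathscr{F}^4_{0,1,2}})\varepsilon\nsubseteq \boldsymbol{B}_{\omega}^{\mathscr{F}^4_{0,1,2}}$. The images of $\boldsymbol{B}_{\omega}^{\mathscr{F}^4_{0}}$ and $\boldsymbol{B}_{\omega}^{\mathscr{F}^4_{1}}$ already lie in $\boldsymbol{B}_{\omega}^{\mathscr{F}^4_{0,1,2}}$, so the image of $\boldsymbol{B}_{\omega}^{\mathscr{F}^4_{2}}$ must leave $\boldsymbol{B}_{\omega}^{\mathscr{F}^4_{0,1,2}}$. This forces $(\boldsymbol{B}_{\omega}^{\mathscr{F}^4_{2}})\varepsilon\subseteq \boldsymbol{B}_{\omega}^{\mathscr{F}^4_{3}}$.

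Second, $(\boldsymbol{B}_{\omega}^{\mathscr{F}^4_{1,2,3}})\varepsilon\nsubseteq \boldsymbol{B}_{\omega}^{\mathscr{F}^4_{1,2,3}}$. So one of $\boldsymbol{B}_{\omega}^{\mathscr{F}^4_{1}}$, $\boldsymbol{B}_{\omega}^{\mathscr{F}^4_{2}}$, $\boldsymbol{B}_{\omega}^{\mathscr{F}^4_{3}}$ is mapped into $\boldsymbol{B}_{\omega}^{\mathscr{F}^4_{0}}$. The first two are already placed in $\boldsymbol{B}_{\omega}^{\mathscr{F}^4_{1}}$ and $\boldsymbol{B}_{\omega}^{\mathscr{F}^4_{3}}$, so this forces $(\boldsymbol{B}_{\omega}^{\mathscr{F}^4_{3}})\varepsilon\subseteq \boldsymbol{B}_{\omega}^{\mathscr{F}^4_{0}}$.

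Now consider $\varepsilon^2$, which is again an injective monoid endomorphism. From the inclusions above:
\begin{itemize}
\item[] $(\boldsymbol{B}_{\omega}^{\mathscr{F}^4_{0}})\varepsilon^2\subseteq \boldsymbol{B}_{\omega}^{\mathscr{F}^4_{0}}$ and $(\boldsymbol{B}_{\omega}^{\mathscr{F}^4_{1}})\varepsilon^2\subseteq \boldsymbol{B}_{\omega}^{\mathscr{F}^4_{1}}$;
\item[] $(\boldsymbol{B}_{\omega}^{\mathscr{F}^4_{2}})\varepsilon^2\subseteq (\boldsymbol{B}_{\omega}^{\mathscr{F}^4_{3}})\varepsilon\subseteq \boldsymbol{B}_{\omega}^{\mathscr{F}^4_{0}}$.
\end{itemize}
Hence $(\boldsymbol{B}_{\omega}^{\mathscr{F}^4_{0,1,2}})\varepsilon^2\subseteq \boldsymbol{B}_{\omega}^{\mathscr{F}^4_{0,1}}\subseteq \boldsymbol{B}_{\omega}^{\mathscr{F}^4_{0,1,2}}$. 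Proposition~\ref{proposition-2.1} then shows that $\varepsilon^2$ is the identity transformation.

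This is a contradiction. For example, $(0,0,[2))\varepsilon^2$ lies in $\boldsymbol{B}_{\omega}^{\mathscr{F}^4_{0}}$ and so cannot equal $(0,0,[2))$. Hence no such $\varepsilon$ exists.

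I expect no real obstacle here. The only delicate point is justifying $(\boldsymbol{B}_{\omega}^{\mathscr{F}^4_{0}})\varepsilon\subseteq \boldsymbol{B}_{\omega}^{\mathscr{F}^4_{0}}$. This follows because $(0,0,[0))$ belongs to $\boldsymbol{B}_{\omega}^{\mathscr{F}^4_{0}}$ and is fixed by $\varepsilon$, so the piece $\boldsymbol{B}_{\omega}^{\mathscr{F}^4_{q}}$ containing the image of $\boldsymbol{B}_{\omega}^{\mathscr{F}^4_{0}}$ must be the one containing $(0,0,[0))$, namely $q=0$.
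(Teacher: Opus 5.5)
Your proposal is correct and follows the paper's proof. You derive $(\boldsymbol{B}_{\omega}^{\mathscr{F}^4_{2}})\varepsilon\subseteq \boldsymbol{B}_{\omega}^{\mathscr{F}^4_{3}}$ and $(\boldsymbol{B}_{\omega}^{\mathscr{F}^4_{3}})\varepsilon\subseteq \boldsymbol{B}_{\omega}^{\mathscr{F}^4_{0}}$, then apply Proposition~\ref{proposition-2.1} to $\varepsilon^2$, which forces $\varepsilon^2$ to be the identity and contradicts $(\boldsymbol{B}_{\omega}^{\mathscr{F}^4_{2}})\varepsilon^2\subseteq \boldsymbol{B}_{\omega}^{\mathscr{F}^4_{0}}$; your explicit case analysis for the two inclusions simply fills in what the paper only asserts.
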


\begin{proof}
Suppose to the contrary that there exists an injective monoid endomorphism $\varepsilon$ of $\boldsymbol{B}_{\omega}^{\mathscr{F}^4}$ which satisfies the assumptions of the proposition. The assumptions of the proposition imply that $(\boldsymbol{B}_{\omega}^{\mathscr{F}^4_{2}})\varepsilon\subseteq \boldsymbol{B}_{\omega}^{\mathscr{F}^4_{3}}$ and $(\boldsymbol{B}_{\omega}^{\mathscr{F}^4_{3}})\varepsilon\subseteq \boldsymbol{B}_{\omega}^{\mathscr{F}^4_{0}}$. Hence we get that
\begin{equation}\label{eq-2.10}
(\boldsymbol{B}_{\omega}^{\mathscr{F}^4_{1}})\varepsilon^2\subseteq \boldsymbol{B}_{\omega}^{\mathscr{F}^4_{1}}, \qquad (\boldsymbol{B}_{\omega}^{\mathscr{F}^4_{2}})\varepsilon^2\subseteq \boldsymbol{B}_{\omega}^{\mathscr{F}^4_{0}}, \qquad \hbox{and} \qquad (\boldsymbol{B}_{\omega}^{\mathscr{F}^4_{3}})\varepsilon^2\subseteq \boldsymbol{B}_{\omega}^{\mathscr{F}^4_{0}}.
\end{equation}
Since $\varepsilon^2$ is an injective monoid endomorphism of the semigroup $\boldsymbol{B}_{\omega}^{\mathscr{F}^4}$ as a composition injective monoid endomorphisms, the above inclusion imply that the assumptions of Proposition~\ref{proposition-2.1} holds for the map $\varepsilon^2$. Hence $\varepsilon^2$ is the identity transformation of the semigroup $\boldsymbol{B}_{\omega}^{\mathscr{F}^4}$ which contradicts inclusions \eqref{eq-2.10}. The obtained contradiction implies the statement of the proposition.
\end{proof}

\begin{proposition}\label{proposition-2.5}
There exists no an injective monoid endomorphism of the semigroup $\boldsymbol{B}_{\omega}^{\mathscr{F}^4}$ with the following properties: $(\boldsymbol{B}_{\omega}^{\mathscr{F}^4_{0,1,2}})\varepsilon\nsubseteq \boldsymbol{B}_{\omega}^{\mathscr{F}^4_{0,1,2}}$,
$(\boldsymbol{B}_{\omega}^{\mathscr{F}^4_{1,2,3}})\varepsilon\nsubseteq \boldsymbol{B}_{\omega}^{\mathscr{F}^4_{1,2,3}}$, and $(\boldsymbol{B}_{\omega}^{\mathscr{F}^4_{1}})\varepsilon\subseteq \boldsymbol{B}_{\omega}^{\mathscr{F}^4_{2}}$.
\end{proposition}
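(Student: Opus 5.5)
The plan is to pin down where $\varepsilon$ sends each ``layer'' $\boldsymbol{B}_{\omega}^{\mathscr{F}^4_{p}}$, and then apply Proposition~\ref{proposition-2.1} to a suitable power of $\varepsilon$, as in the proofs of Propositions~\ref{proposition-2.3} and~\ref{proposition-2.4}.

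\emph{Step 1: each layer goes into one layer.} Suppose, to the contrary, that such an $\varepsilon$ exists. Every $\boldsymbol{B}_{\omega}^{\mathscr{F}^4_{p}}$ is isomorphic to the bicyclic semigroup. So Proposition~4 of \cite{Gutik-Mykhalenych=2021}, together with injectivity of $\varepsilon$, gives for each $p\in\{0,1,2,3\}$ some $q\in\{0,1,2,3\}$ with $(\boldsymbol{B}_{\omega}^{\mathscr{F}^4_{p}})\varepsilon\subseteq \boldsymbol{B}_{\omega}^{\mathscr{F}^4_{q}}$.

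\emph{Step 2: $\boldsymbol{B}_{\omega}^{\mathscr{F}^4_{0}}$ is preserved.} Since $\varepsilon$ is a monoid endomorphism, $(0,0,[0))\varepsilon=(0,0,[0))$. Hence the layer containing the image of $\boldsymbol{B}_{\omega}^{\mathscr{F}^4_{0}}$ must be $\boldsymbol{B}_{\omega}^{\mathscr{F}^4_{0}}$ itself, as was already used in the proof of Proposition~\ref{proposition-2.3}.

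\emph{Step 3: forcing the remaining layers.} By assumption $(\boldsymbol{B}_{\omega}^{\mathscr{F}^4_{1}})\varepsilon\subseteq\boldsymbol{B}_{\omega}^{\mathscr{F}^4_{2}}$.
\begin{itemize}
\item The condition $(\boldsymbol{B}_{\omega}^{\mathscr{F}^4_{0,1,2}})\varepsilon\nsubseteq \boldsymbol{B}_{\omega}^{\mathscr{F}^4_{0,1,2}}$ means one of the layers $0,1,2$ is sent into $\boldsymbol{B}_{\omega}^{\mathscr{F}^4_{3}}$. Layers $0$ and $1$ are already placed, so $(\boldsymbol{B}_{\omega}^{\mathscr{F}^4_{2}})\varepsilon\subseteq \boldsymbol{B}_{\omega}^{\mathscr{F}^4_{3}}$.
\item The condition $(\boldsymbol{B}_{\omega}^{\mathscr{F}^4_{1,2,3}})\varepsilon\nsubseteq \boldsymbol{B}_{\omega}^{\mathscr{F}^4_{1,2,3}}$ means one of the layers $1,2,3$ is sent into $\boldsymbol{B}_{\omega}^{\mathscr{F}^4_{0}}$. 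Layers $1$ and $2$ go to $2$ and $3$, so $(\boldsymbol{B}_{\omega}^{\mathscr{F}^4_{3}})\varepsilon\subseteq \boldsymbol{B}_{\omega}^{\mathscr{F}^4_{0}}$.
\end{itemize}
Thus $\varepsilon$ acts on layer indices as $0\mapsto0$, $1\mapsto2$, $2\mapsto3$, $3\mapsto0$.

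\emph{Step 4: iterate and apply Proposition~\ref{proposition-2.1}.} Composing these index maps, $\varepsilon^2$ sends $1\mapsto3$, $2\mapsto0$, $3\mapsto0$. Then $\varepsilon^3$ sends $1\mapsto0$, $2\mapsto0$, $3\mapsto0$, and still fixes $0$. Hence $(\boldsymbol{B}_{\omega}^{\mathscr{F}^4_{0,1,2}})\varepsilon^3\subseteq \boldsymbol{B}_{\omega}^{\mathscr{F}^4_{0}}\subseteq\boldsymbol{B}_{\omega}^{\mathscr{F}^4_{0,1,2}}$. The map $\varepsilon^3$ is an injective monoid endomorphism, being a composition of such. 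So Proposition~\ref{proposition-2.1} says $\varepsilon^3$ is the identity transformation. This is impossible, because $(\boldsymbol{B}_{\omega}^{\mathscr{F}^4_{1}})\varepsilon^3\subseteq \boldsymbol{B}_{\omega}^{\mathscr{F}^4_{0}}$ and $\boldsymbol{B}_{\omega}^{\mathscr{F}^4_{0}}\cap\boldsymbol{B}_{\omega}^{\mathscr{F}^4_{1}}=\varnothing$.

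The only delicate point is choosing the power: $\varepsilon^2$ does not work, since it sends layer $1$ into $\boldsymbol{B}_{\omega}^{\mathscr{F}^4_{3}}$, but $\varepsilon^3$ does. The forcing in Step~3 is a routine bookkeeping check.
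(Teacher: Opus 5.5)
Your proposal is correct and follows essentially the same route as the paper. Both deduce $(\boldsymbol{B}_{\omega}^{\mathscr{F}^4_{2}})\varepsilon\subseteq \boldsymbol{B}_{\omega}^{\mathscr{F}^4_{3}}$ and $(\boldsymbol{B}_{\omega}^{\mathscr{F}^4_{3}})\varepsilon\subseteq \boldsymbol{B}_{\omega}^{\mathscr{F}^4_{0}}$, then apply Proposition~\ref{proposition-2.1} to $\varepsilon^3$, which sends $\boldsymbol{B}_{\omega}^{\mathscr{F}^4_{0,1,2}}$ into $\boldsymbol{B}_{\omega}^{\mathscr{F}^4_{0}}$; your Steps~1--3 simply spell out the layer bookkeeping that the paper leaves implicit.
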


\begin{proof}
Suppose to the contrary that there exists an injective monoid endomorphism $\varepsilon$ of $\boldsymbol{B}_{\omega}^{\mathscr{F}^4}$ which satisfies the assumptions of the proposition. The assumptions of the proposition imply that $(\boldsymbol{B}_{\omega}^{\mathscr{F}^4_{2}})\varepsilon\subseteq \boldsymbol{B}_{\omega}^{\mathscr{F}^4_{3}}$ and $(\boldsymbol{B}_{\omega}^{\mathscr{F}^4_{3}})\varepsilon\subseteq \boldsymbol{B}_{\omega}^{\mathscr{F}^4_{0}}$. The mapping $\varepsilon^3$ is an injective monoid endomorphism of the semigroup $\boldsymbol{B}_{\omega}^{\mathscr{F}^4}$ as a composition injective monoid endomorphisms. Since $(\boldsymbol{B}_{\omega}^{\mathscr{F}^4_{0,1,2}})\varepsilon^3\subseteq \boldsymbol{B}_{\omega}^{\mathscr{F}^4_{0}} \subseteq \boldsymbol{B}_{\omega}^{\mathscr{F}^4_{0,1,2}}$, Proposition~\ref{proposition-2.1} implies that the mapping  $\varepsilon^3$ is the identity transformation of the semigroup $\boldsymbol{B}_{\omega}^{\mathscr{F}^4}$. But we have that $(\boldsymbol{B}_{\omega}^{\mathscr{F}^4_{1}})\varepsilon^3\subseteq \boldsymbol{B}_{\omega}^{\mathscr{F}^4_{0}}$, $(\boldsymbol{B}_{\omega}^{\mathscr{F}^4_{2}})\varepsilon^3\subseteq \boldsymbol{B}_{\omega}^{\mathscr{F}^4_{0}}$, and
$(\boldsymbol{B}_{\omega}^{\mathscr{F}^4_{3}})\varepsilon^3\subseteq \boldsymbol{B}_{\omega}^{\mathscr{F}^4_{0}}$, a contradiction. The obtained contradiction implies the statement of the proposition.
\end{proof}

\begin{proposition}\label{proposition-2.6}
There exists no an injective monoid endomorphism of the semigroup $\boldsymbol{B}_{\omega}^{\mathscr{F}^4}$ with the following properties: $(\boldsymbol{B}_{\omega}^{\mathscr{F}^4_{0,1,2}})\varepsilon\nsubseteq \boldsymbol{B}_{\omega}^{\mathscr{F}^4_{0,1,2}}$,
$(\boldsymbol{B}_{\omega}^{\mathscr{F}^4_{1,2,3}})\varepsilon\nsubseteq \boldsymbol{B}_{\omega}^{\mathscr{F}^4_{1,2,3}}$, $(\boldsymbol{B}_{\omega}^{\mathscr{F}^4_{1}})\varepsilon\subseteq \boldsymbol{B}_{\omega}^{\mathscr{F}^4_{3}}$ and $(\boldsymbol{B}_{\omega}^{\mathscr{F}^4_{2}})\varepsilon\subseteq \boldsymbol{B}_{\omega}^{\mathscr{F}^4_{0}}$.
\end{proposition}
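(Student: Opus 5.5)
The plan is to obtain a contradiction from a single product, with no case split on where $\boldsymbol{B}_{\omega}^{\mathscr{F}^4_{3}}$ is mapped.

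First I fix where $\boldsymbol{B}_{\omega}^{\mathscr{F}^4_{0}}$ goes. The subsemigroup $\boldsymbol{B}_{\omega}^{\mathscr{F}^4_{0}}$ is isomorphic to the bicyclic semigroup, so by Proposition~4 of \cite{Gutik-Mykhalenych=2021} and the injectivity of $\varepsilon$ its image lies in a single $\boldsymbol{B}_{\omega}^{\mathscr{F}^4_{q}}$. This is the same reasoning as at the start of the proof of Proposition~\ref{proposition-2.3}. Since $\varepsilon$ is a monoid endomorphism, $(0,0,[0))\varepsilon=(0,0,[0))\in \boldsymbol{B}_{\omega}^{\mathscr{F}^4_{0}}$, and therefore
\begin{equation*}
(\boldsymbol{B}_{\omega}^{\mathscr{F}^4_{0}})\varepsilon\subseteq \boldsymbol{B}_{\omega}^{\mathscr{F}^4_{0}}.
\end{equation*}
By hypothesis we also have $(\boldsymbol{B}_{\omega}^{\mathscr{F}^4_{2}})\varepsilon\subseteq \boldsymbol{B}_{\omega}^{\mathscr{F}^4_{0}}$ and $(\boldsymbol{B}_{\omega}^{\mathscr{F}^4_{1}})\varepsilon\subseteq \boldsymbol{B}_{\omega}^{\mathscr{F}^4_{3}}$.

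The key observation is that some element of $\boldsymbol{B}_{\omega}^{\mathscr{F}^4_{1}}$ factors as a product of an element of $\boldsymbol{B}_{\omega}^{\mathscr{F}^4_{2}}$ and an element of $\boldsymbol{B}_{\omega}^{\mathscr{F}^4_{0}}$. By formula \eqref{eq-1.2}, since $j_1=0\leqslant 1=i_2$,
\begin{equation*}
(0,0,[2))\cdot(1,1,[0))=(1,1,(-1+[2))\cap[0))=(1,1,[1)).
\end{equation*}
Applying $\varepsilon$ gives $(1,1,[1))\varepsilon=(0,0,[2))\varepsilon\cdot(1,1,[0))\varepsilon$. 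The right-hand side is a product of two elements of $\boldsymbol{B}_{\omega}^{\mathscr{F}^4_{0}}$, so it lies in $\boldsymbol{B}_{\omega}^{\mathscr{F}^4_{0}}$: this subsemigroup is closed under multiplication, because $[0)\cap(n+[0))=[n)\cap[0)$ shifted back is again $[0)$ by \eqref{eq-1.2}. The left-hand side, however, lies in $\boldsymbol{B}_{\omega}^{\mathscr{F}^4_{3}}$. Since $\boldsymbol{B}_{\omega}^{\mathscr{F}^4_{0}}$ and $\boldsymbol{B}_{\omega}^{\mathscr{F}^4_{3}}$ are disjoint, this is a contradiction.

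The only point needing care is the first step, that $\boldsymbol{B}_{\omega}^{\mathscr{F}^4_{0}}$ is mapped into itself. This is exactly where the monoid condition and Proposition~4 of \cite{Gutik-Mykhalenych=2021} are used. After that the argument is a one-line computation and does not use the first two hypotheses of the proposition.
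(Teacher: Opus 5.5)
Your proof is correct, and it is shorter than the paper's. The paper splits into four cases according to which $\boldsymbol{B}_{\omega}^{\mathscr{F}^4_{q}}$ contains $(\boldsymbol{B}_{\omega}^{\mathscr{F}^4_{3}})\varepsilon$.

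In cases $(i)$--$(iii)$ ($q=0,1,2$) the paper passes to $\varepsilon^2$ and observes that $\varepsilon^2$ maps $\boldsymbol{B}_{\omega}^{\mathscr{F}^4_{0,1,2}}$ into itself. It concludes from Proposition~\ref{proposition-2.1} that $\varepsilon^2$ is the identity, and contradicts this with $(\boldsymbol{B}_{\omega}^{\mathscr{F}^4_{2}})\varepsilon^2\subseteq\boldsymbol{B}_{\omega}^{\mathscr{F}^4_{0}}$. Only in case $(iv)$ ($q=3$) does it compute $(1,1,[0))\cdot(0,0,[2))=(1,1,[1))$ and compare components. That is exactly your computation, with the two commuting idempotent factors interchanged.

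What you noticed is that this computation uses only three inclusions: $(\boldsymbol{B}_{\omega}^{\mathscr{F}^4_{0}})\varepsilon\subseteq\boldsymbol{B}_{\omega}^{\mathscr{F}^4_{0}}$, $(\boldsymbol{B}_{\omega}^{\mathscr{F}^4_{2}})\varepsilon\subseteq\boldsymbol{B}_{\omega}^{\mathscr{F}^4_{0}}$ and $(\boldsymbol{B}_{\omega}^{\mathscr{F}^4_{1}})\varepsilon\subseteq\boldsymbol{B}_{\omega}^{\mathscr{F}^4_{3}}$. It never uses the case assumption on $\boldsymbol{B}_{\omega}^{\mathscr{F}^4_{3}}$, so it disposes of all four cases at once.

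Your route therefore makes the proposition independent of Proposition~\ref{proposition-2.1}. It thus also avoids the external classification of injective monoid endomorphisms of $\boldsymbol{B}_{\omega}^{\mathscr{F}^3}$ and the three-fixed-points theorem on which that proposition rests. The paper's case split gains nothing here. The first two hypotheses are indeed superfluous, since they follow from the last two.

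One cosmetic point: your justification that $\boldsymbol{B}_{\omega}^{\mathscr{F}^4_{0}}$ is closed under multiplication is garbled. It is enough to say that in \eqref{eq-1.2}, when both sets equal $[0)$, one of them is shifted by a non-positive integer and then intersected with the other, which gives $[0)$. Alternatively, this is the standard fact that $\boldsymbol{B}_{\omega}^{\mathscr{F}^4_{0}}$ is a subsemigroup isomorphic to the bicyclic monoid.
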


\begin{proof}
Suppose to the contrary that there exists an injective monoid endomorphism $\varepsilon$ of $\boldsymbol{B}_{\omega}^{\mathscr{F}^4}$ which satisfies the assumptions of the proposition.Since for any $s\in\{0,1,2,3\}$ the subsemigroup $\boldsymbol{B}_{\omega}^{\mathscr{F}^4_{s}}$ of $\boldsymbol{B}_{\omega}^{\mathscr{F}^4}$ is isomorphic to the bicyclic semigroup, Proposition~4 of \cite{Gutik-Mykhalenych=2021} and the injectivity of $\varepsilon$ implies that for any $p\in\{0,1,2,3\}$ there exists $q\in \{0,1,2,3\}$ such that $(\boldsymbol{B}_{\omega}^{\mathscr{F}^4_{p}})\varepsilon\subseteq \boldsymbol{B}_{\omega}^{\mathscr{F}^4_{q}}$. Then the inclusions $(\boldsymbol{B}_{\omega}^{\mathscr{F}^4_{0}})\varepsilon\subseteq \boldsymbol{B}_{\omega}^{\mathscr{F}^4_{0}}$, $(\boldsymbol{B}_{\omega}^{\mathscr{F}^4_{1}})\varepsilon\subseteq \boldsymbol{B}_{\omega}^{\mathscr{F}^4_{3}}$, and $(\boldsymbol{B}_{\omega}^{\mathscr{F}^4_{2}})\varepsilon\subseteq \boldsymbol{B}_{\omega}^{\mathscr{F}^4_{0}}$ imply that only one of the following cases holds:
\begin{itemize}
  \item[$(i)$]   $(\boldsymbol{B}_{\omega}^{\mathscr{F}^4_{3}})\varepsilon\subseteq \boldsymbol{B}_{\omega}^{\mathscr{F}^4_{0}}$;
  \item[$(ii)$]  $(\boldsymbol{B}_{\omega}^{\mathscr{F}^4_{3}})\varepsilon\subseteq \boldsymbol{B}_{\omega}^{\mathscr{F}^4_{1}}$;
  \item[$(iii)$] $(\boldsymbol{B}_{\omega}^{\mathscr{F}^4_{3}})\varepsilon\subseteq \boldsymbol{B}_{\omega}^{\mathscr{F}^4_{2}}$;
  \item[$(iv)$]  $(\boldsymbol{B}_{\omega}^{\mathscr{F}^4_{3}})\varepsilon\subseteq \boldsymbol{B}_{\omega}^{\mathscr{F}^4_{3}}$.
\end{itemize}

We observe that the mappings $\varepsilon^2$ and $\varepsilon^3$ are injective monoid endomorphisms of the semigroup $\boldsymbol{B}_{\omega}^{\mathscr{F}^4}$ because they are compositions of injective monoid endomorphisms.

Suppose that case $(i)$ holds.  Since $(\boldsymbol{B}_{\omega}^{\mathscr{F}^4_{0,1,2}})\varepsilon^2\subseteq \boldsymbol{B}_{\omega}^{\mathscr{F}^4_{0}} \subseteq \boldsymbol{B}_{\omega}^{\mathscr{F}^4_{0,1,2}}$, Proposition~\ref{proposition-2.1} implies that the mapping  $\varepsilon^3$ is the identity transformation of the semigroup $\boldsymbol{B}_{\omega}^{\mathscr{F}^4}$.  But we have that  $(\boldsymbol{B}_{\omega}^{\mathscr{F}^4_{0,1,2}})\varepsilon^2\subseteq \boldsymbol{B}_{\omega}^{\mathscr{F}^4_{0}}$, a contradiction.

In cases $(ii)$ and $(iii)$ we have that $(\boldsymbol{B}_{\omega}^{\mathscr{F}^4_{0}})\varepsilon^2\subseteq \boldsymbol{B}_{\omega}^{\mathscr{F}^4_{0}}$, $(\boldsymbol{B}_{\omega}^{\mathscr{F}^4_{1}})\varepsilon^2\subseteq \boldsymbol{B}_{\omega}^{\mathscr{F}^4_{2}}$, and $(\boldsymbol{B}_{\omega}^{\mathscr{F}^4_{2}})\varepsilon^2\subseteq \boldsymbol{B}_{\omega}^{\mathscr{F}^4_{0}}$. This and  Proposition~\ref{proposition-2.1} imply that the mapping  $\varepsilon^2$ is the identity transformation of the semigroup $\boldsymbol{B}_{\omega}^{\mathscr{F}^4}$, a contradiction.

Suppose that case $(iv)$ holds. By the definition of the semigroup operation of $\boldsymbol{B}_{\omega}^{\mathscr{F}^4}$ (see formula \eqref{eq-1.2}) we have that
\begin{equation}\label{eq-2.11}
  (1,1,[0))\cdot(0,0,[2))=(1,1,[1)).
\end{equation}
Since a homomorphic image of an idempotent is an idempotent, too, we conclude that the assumptions of the proposition  imply that there exist non-negative integers $k_0$, $k_1$, and $k_2$ such that
\begin{align*}
  (1,1,[0))\varepsilon&=(k_0,k_0,[0)); \\
  (1,1,[1))\varepsilon&=(k_1,k_1,[3)); \\
  (0,0,[2))\varepsilon&=(k_2,k_2,[0)),
\end{align*}
and by equality \eqref{eq-2.11} we obtain that
\begin{equation*}
  ((1,1,[0))\cdot(0,0,[2)))\varepsilon=(1,1,[1))\varepsilon=(k_1,k_1,[3)).
\end{equation*}
But we have that
\begin{align*}
  (1,1,[0))\varepsilon\cdot(0,0,[2))\varepsilon&=(k_0,k_0,[0))\cdot(k_2,k_2,[0))= \\
   &=
   \left\{
     \begin{array}{ll}
       (k_2,k_2,(k_0-k_2+[0))\cap[0)), & \hbox{if~} k_0\leqslant k_2;\\
       (k_0,k_0,[0)\cap(k_2-k_0+[0))), & \hbox{if~} k_0> k_2
     \end{array}
   \right.\in\\
   &\in\boldsymbol{B}_{\omega}^{\mathscr{F}^4_{0}},
\end{align*}
a contradiction.

The obtained contradictions imply the statement of the proposition.
\end{proof}

\begin{proposition}\label{proposition-2.7}
There exists no an injective monoid endomorphism of the semigroup $\boldsymbol{B}_{\omega}^{\mathscr{F}^4}$ with the following properties: $(\boldsymbol{B}_{\omega}^{\mathscr{F}^4_{0,1,2}})\varepsilon\nsubseteq \boldsymbol{B}_{\omega}^{\mathscr{F}^4_{0,1,2}}$,
$(\boldsymbol{B}_{\omega}^{\mathscr{F}^4_{1,2,3}})\varepsilon\nsubseteq \boldsymbol{B}_{\omega}^{\mathscr{F}^4_{1,2,3}}$, $(\boldsymbol{B}_{\omega}^{\mathscr{F}^4_{1}})\varepsilon\subseteq \boldsymbol{B}_{\omega}^{\mathscr{F}^4_{3}}$, $(\boldsymbol{B}_{\omega}^{\mathscr{F}^4_{2}})\varepsilon\subseteq \boldsymbol{B}_{\omega}^{\mathscr{F}^4_{1}}$, and $(\boldsymbol{B}_{\omega}^{\mathscr{F}^4_{3}})\varepsilon\subseteq \boldsymbol{B}_{\omega}^{\mathscr{F}^4_{0}}$.
\end{proposition}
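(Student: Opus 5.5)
The plan is to iterate $\varepsilon$ until its cube lands in the situation of Proposition~\ref{proposition-2.1}, in the same way as in the proofs of Propositions~\ref{proposition-2.4} and~\ref{proposition-2.5}. Suppose, to the contrary, that such an injective monoid endomorphism $\varepsilon$ exists. Since $\varepsilon$ is a monoid endomorphism, $(0,0,[0))\varepsilon=(0,0,[0))$. Hence, as in the proofs of the previous propositions, Proposition~4 of \cite{Gutik-Mykhalenych=2021} gives $(\boldsymbol{B}_{\omega}^{\mathscr{F}^4_{0}})\varepsilon\subseteq \boldsymbol{B}_{\omega}^{\mathscr{F}^4_{0}}$. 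Together with the assumptions, $\varepsilon$ therefore acts on the indices of the bicyclic pieces as $0\mapsto0$, $1\mapsto3$, $2\mapsto1$, $3\mapsto0$.

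Next I compose these inclusions. For $\varepsilon^2$ they give $0\mapsto0$, $1\mapsto0$, $2\mapsto3$, $3\mapsto0$. For $\varepsilon^3$ they give $0\mapsto0$, $1\mapsto0$, $2\mapsto0$, $3\mapsto0$. So $(\boldsymbol{B}_{\omega}^{\mathscr{F}^4_{0,1,2}})\varepsilon^3\subseteq \boldsymbol{B}_{\omega}^{\mathscr{F}^4_{0}}\subseteq \boldsymbol{B}_{\omega}^{\mathscr{F}^4_{0,1,2}}$. The map $\varepsilon^3$ is an injective monoid endomorphism, being a composition of such maps, so Proposition~\ref{proposition-2.1} implies that $\varepsilon^3$ is the identity transformation. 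This is impossible, because $(0,0,[1))\varepsilon^3\in \boldsymbol{B}_{\omega}^{\mathscr{F}^4_{0}}$ while $(0,0,[1))\notin\boldsymbol{B}_{\omega}^{\mathscr{F}^4_{0}}$. This contradiction proves the proposition.

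The only point needing care is the first step, namely $(\boldsymbol{B}_{\omega}^{\mathscr{F}^4_{0}})\varepsilon\subseteq\boldsymbol{B}_{\omega}^{\mathscr{F}^4_{0}}$. If that step is problematic, I would give a direct contradiction in the style of case $(iv)$ of Proposition~\ref{proposition-2.6}, which only uses images of $\boldsymbol{B}_{\omega}^{\mathscr{F}^4_{0}}$ and $\boldsymbol{B}_{\omega}^{\mathscr{F}^4_{2}}$. By formula~\eqref{eq-1.2}, $(1,1,[0))\cdot(0,0,[2))=(1,1,[1))$, so $(1,1,[1))\varepsilon$ must lie in $\boldsymbol{B}_{\omega}^{\mathscr{F}^4_{3}}$. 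On the other hand, $(1,1,[0))\varepsilon=(a,a,[0))$ and $(0,0,[2))\varepsilon=(b,b,[1))$ for some $a,b\in\omega$. By \eqref{eq-1.2}, their product is $(b,b,[1))$ if $a\leqslant b$, and $(a,a,[0))$ if $a>b$. In either case the product is not in $\boldsymbol{B}_{\omega}^{\mathscr{F}^4_{3}}$, which again contradicts the assumption that $\varepsilon$ is a homomorphism.
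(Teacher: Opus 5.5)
Your argument is correct and is essentially the paper's proof: the paper also passes to $\varepsilon^3$, observes that it maps $\boldsymbol{B}_{\omega}^{\mathscr{F}^4_{0}}$, $\boldsymbol{B}_{\omega}^{\mathscr{F}^4_{1}}$ and $\boldsymbol{B}_{\omega}^{\mathscr{F}^4_{2}}$ into $\boldsymbol{B}_{\omega}^{\mathscr{F}^4_{0}}$, and concludes from Proposition~\ref{proposition-2.1} that $\varepsilon^3$ would be the identity, which is absurd; your justification of $(\boldsymbol{B}_{\omega}^{\mathscr{F}^4_{0}})\varepsilon\subseteq\boldsymbol{B}_{\omega}^{\mathscr{F}^4_{0}}$ is the one the paper uses in Propositions~\ref{proposition-2.3} and~\ref{proposition-2.6}. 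Your fallback computation is valid but does not bypass that step, since writing $(1,1,[0))\varepsilon=(a,a,[0))$ already uses $(\boldsymbol{B}_{\omega}^{\mathscr{F}^4_{0}})\varepsilon\subseteq\boldsymbol{B}_{\omega}^{\mathscr{F}^4_{0}}$; it is simply unnecessary.
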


\begin{proof}
Simple verifications show that $(\boldsymbol{B}_{\omega}^{\mathscr{F}^4_{0}})\varepsilon^3\subseteq \boldsymbol{B}_{\omega}^{\mathscr{F}^4_{0}}$, $(\boldsymbol{B}_{\omega}^{\mathscr{F}^4_{1}})\varepsilon^3\subseteq \boldsymbol{B}_{\omega}^{\mathscr{F}^4_{0}}$, and $(\boldsymbol{B}_{\omega}^{\mathscr{F}^4_{2}})\varepsilon^3\subseteq \boldsymbol{B}_{\omega}^{\mathscr{F}^4_{0}}$. It is obvious that $\varepsilon^3$ is an injective monoid endomorphism of the semigroup $\boldsymbol{B}_{\omega}^{\mathscr{F}^4}$ as a composition of injective monoid endomorphisms. By Proposition~\ref{proposition-2.1}  the mapping  $\varepsilon^3$ is the identity transformation of the semigroup $\boldsymbol{B}_{\omega}^{\mathscr{F}^4}$, a contrdiction. The obtained contradiction implies the statement of the proposition.
\end{proof}

\begin{proposition}\label{proposition-2.8}
There exists no an injective monoid endomorphism of the semigroup $\boldsymbol{B}_{\omega}^{\mathscr{F}^4}$ with the following properties: $(\boldsymbol{B}_{\omega}^{\mathscr{F}^4_{0,1,2}})\varepsilon\nsubseteq \boldsymbol{B}_{\omega}^{\mathscr{F}^4_{0,1,2}}$,
$(\boldsymbol{B}_{\omega}^{\mathscr{F}^4_{1,2,3}})\varepsilon\nsubseteq \boldsymbol{B}_{\omega}^{\mathscr{F}^4_{1,2,3}}$, $(\boldsymbol{B}_{\omega}^{\mathscr{F}^4_{1}})\varepsilon\subseteq \boldsymbol{B}_{\omega}^{\mathscr{F}^4_{3}}$, $(\boldsymbol{B}_{\omega}^{\mathscr{F}^4_{2}})\varepsilon\subseteq \boldsymbol{B}_{\omega}^{\mathscr{F}^4_{2}}$, and $(\boldsymbol{B}_{\omega}^{\mathscr{F}^4_{3}})\varepsilon\subseteq \boldsymbol{B}_{\omega}^{\mathscr{F}^4_{0}}$.
\end{proposition}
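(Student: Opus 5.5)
The plan is to argue by contradiction, in the same way as Propositions~\ref{proposition-2.4}, \ref{proposition-2.5} and \ref{proposition-2.7}. We pass to a suitable power of $\varepsilon$, which lands us in the situation of Proposition~\ref{proposition-2.1}, and then compare the resulting identity map with the known action of that power on the blocks $\boldsymbol{B}_{\omega}^{\mathscr{F}^4_{p}}$.

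Suppose such an injective monoid endomorphism $\varepsilon$ exists. Since $\varepsilon$ is a monoid endomorphism, $(0,0,[0))\varepsilon=(0,0,[0))$. As in the proofs of Propositions~\ref{proposition-2.3} and \ref{proposition-2.6}, by Proposition~4 of \cite{Gutik-Mykhalenych=2021} and injectivity each block $\boldsymbol{B}_{\omega}^{\mathscr{F}^4_{p}}$ is mapped into a single block. This gives $(\boldsymbol{B}_{\omega}^{\mathscr{F}^4_{0}})\varepsilon\subseteq \boldsymbol{B}_{\omega}^{\mathscr{F}^4_{0}}$. Together with the hypotheses, the block pattern of $\varepsilon$ is
\begin{equation*}
0\mapsto 0,\qquad 1\mapsto 3,\qquad 2\mapsto 2,\qquad 3\mapsto 0 .
\end{equation*}
Composing, the map $\varepsilon^2$, which is again an injective monoid endomorphism, has block pattern $0\mapsto 0$, $1\mapsto 0$, $2\mapsto 2$, $3\mapsto 0$. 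In particular
\begin{equation*}
(\boldsymbol{B}_{\omega}^{\mathscr{F}^4_{0,1,2}})\varepsilon^2\subseteq \boldsymbol{B}_{\omega}^{\mathscr{F}^4_{0,2}}\subseteq \boldsymbol{B}_{\omega}^{\mathscr{F}^4_{0,1,2}} .
\end{equation*}

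By Proposition~\ref{proposition-2.1}, $\varepsilon^2$ is then the identity transformation of $\boldsymbol{B}_{\omega}^{\mathscr{F}^4}$. This is impossible, because $(\boldsymbol{B}_{\omega}^{\mathscr{F}^4_{1}})\varepsilon^2\subseteq \boldsymbol{B}_{\omega}^{\mathscr{F}^4_{0}}$ while the blocks $\boldsymbol{B}_{\omega}^{\mathscr{F}^4_{1}}$ and $\boldsymbol{B}_{\omega}^{\mathscr{F}^4_{0}}$ are disjoint. For instance, $(0,0,[1))\varepsilon^2\neq(0,0,[1))$. This contradiction proves the proposition.

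The only point requiring care is to justify that $\boldsymbol{B}_{\omega}^{\mathscr{F}^4_{0}}$ is mapped into itself. This follows, as in Proposition~\ref{proposition-2.3}, from $\varepsilon$ fixing the identity $(0,0,[0))$ together with the block-preserving property from Proposition~4 of \cite{Gutik-Mykhalenych=2021}. Everything else is a direct composition of the block patterns, so I do not expect a real obstacle.
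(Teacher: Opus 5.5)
Your proposal is correct and is essentially the paper's proof. The paper also passes to $\varepsilon^2$, notes that it maps $\boldsymbol{B}_{\omega}^{\mathscr{F}^4_{0}}$ and $\boldsymbol{B}_{\omega}^{\mathscr{F}^4_{1}}$ into $\boldsymbol{B}_{\omega}^{\mathscr{F}^4_{0}}$ and $\boldsymbol{B}_{\omega}^{\mathscr{F}^4_{2}}$ into itself, applies Proposition~\ref{proposition-2.1} to conclude that $\varepsilon^2$ is the identity transformation, and reaches the same contradiction. Your explicit argument that $\boldsymbol{B}_{\omega}^{\mathscr{F}^4_{0}}$ is mapped into itself (the identity $(0,0,[0))$ is fixed and each block goes into a single block) simply spells out a step the paper leaves tacit.
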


\begin{proof}
We observe that that $\varepsilon^2$ is an injective monoid endomorphism of the semigroup $\boldsymbol{B}_{\omega}^{\mathscr{F}^4}$ as a composition of injective monoid endomorphisms such that $(\boldsymbol{B}_{\omega}^{\mathscr{F}^4_{0}})\varepsilon^2\subseteq \boldsymbol{B}_{\omega}^{\mathscr{F}^4_{0}}$, $(\boldsymbol{B}_{\omega}^{\mathscr{F}^4_{1}})\varepsilon^2\subseteq \boldsymbol{B}_{\omega}^{\mathscr{F}^4_{0}}$, and $(\boldsymbol{B}_{\omega}^{\mathscr{F}^4_{2}})\varepsilon^2\subseteq \boldsymbol{B}_{\omega}^{\mathscr{F}^4_{2}}$. Proposition~\ref{proposition-2.1} implies that the mapping  $\varepsilon^2$ is the identity transformation of the semigroup $\boldsymbol{B}_{\omega}^{\mathscr{F}^4}$, a contradiction. The obtained contradiction implies the statement of the proposition.
\end{proof}

\begin{proposition}\label{proposition-2.9}
There exists no an injective monoid endomorphism of the semigroup $\boldsymbol{B}_{\omega}^{\mathscr{F}^4}$ with the following properties: $(\boldsymbol{B}_{\omega}^{\mathscr{F}^4_{0,1,2}})\varepsilon\nsubseteq \boldsymbol{B}_{\omega}^{\mathscr{F}^4_{0,1,2}}$,
$(\boldsymbol{B}_{\omega}^{\mathscr{F}^4_{1,2,3}})\varepsilon\nsubseteq \boldsymbol{B}_{\omega}^{\mathscr{F}^4_{1,2,3}}$, $(\boldsymbol{B}_{\omega}^{\mathscr{F}^4_{1}})\varepsilon\subseteq \boldsymbol{B}_{\omega}^{\mathscr{F}^4_{3}}$, $(\boldsymbol{B}_{\omega}^{\mathscr{F}^4_{2}})\varepsilon\subseteq \boldsymbol{B}_{\omega}^{\mathscr{F}^4_{3}}$, and $(\boldsymbol{B}_{\omega}^{\mathscr{F}^4_{3}})\varepsilon\subseteq \boldsymbol{B}_{\omega}^{\mathscr{F}^4_{0}}$.
\end{proposition}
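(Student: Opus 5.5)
The plan is to argue exactly as in the proofs of Propositions~\ref{proposition-2.7} and~\ref{proposition-2.8}: pass to a suitable power of $\varepsilon$ so that the hypothesis of Proposition~\ref{proposition-2.1} is satisfied. That power must then be the identity, which contradicts where it sends one of the subsemigroups $\boldsymbol{B}_{\omega}^{\mathscr{F}^4_{p}}$.

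\textbf{Step 1: set up the contradiction.} Suppose to the contrary that $\varepsilon$ is an injective monoid endomorphism with the listed properties. Since $\varepsilon$ is a monoid endomorphism, $(0,0,[0))\varepsilon=(0,0,[0))$. Hence $(\boldsymbol{B}_{\omega}^{\mathscr{F}^4_{0}})\varepsilon\subseteq \boldsymbol{B}_{\omega}^{\mathscr{F}^4_{0}}$, as was used in the preceding propositions; this also follows from Proposition~4 of \cite{Gutik-Mykhalenych=2021} together with the fact that $\varepsilon$ preserves the identity.

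\textbf{Step 2: compute where $\varepsilon^2$ sends each block.} Combining the assumed inclusions gives
\begin{align*}
(\boldsymbol{B}_{\omega}^{\mathscr{F}^4_{0}})\varepsilon^2&\subseteq \boldsymbol{B}_{\omega}^{\mathscr{F}^4_{0}}, &
(\boldsymbol{B}_{\omega}^{\mathscr{F}^4_{1}})\varepsilon^2&\subseteq (\boldsymbol{B}_{\omega}^{\mathscr{F}^4_{3}})\varepsilon\subseteq \boldsymbol{B}_{\omega}^{\mathscr{F}^4_{0}},\\
(\boldsymbol{B}_{\omega}^{\mathscr{F}^4_{2}})\varepsilon^2&\subseteq (\boldsymbol{B}_{\omega}^{\mathscr{F}^4_{3}})\varepsilon\subseteq \boldsymbol{B}_{\omega}^{\mathscr{F}^4_{0}}, &
(\boldsymbol{B}_{\omega}^{\mathscr{F}^4_{3}})\varepsilon^2&\subseteq \boldsymbol{B}_{\omega}^{\mathscr{F}^4_{0}}.
\end{align*}
In particular, $(\boldsymbol{B}_{\omega}^{\mathscr{F}^4_{0,1,2}})\varepsilon^2\subseteq \boldsymbol{B}_{\omega}^{\mathscr{F}^4_{0}}\subseteq \boldsymbol{B}_{\omega}^{\mathscr{F}^4_{0,1,2}}$.

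\textbf{Step 3: apply Proposition~\ref{proposition-2.1}.} The map $\varepsilon^2$ is an injective monoid endomorphism of $\boldsymbol{B}_{\omega}^{\mathscr{F}^4}$, being a composition of such maps. By Step~2 it satisfies the hypothesis of Proposition~\ref{proposition-2.1}, so $\varepsilon^2$ is the identity transformation.

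\textbf{Step 4: derive the contradiction.} If $\varepsilon^2$ is the identity, then $(0,0,[1))\varepsilon^2=(0,0,[1))\notin \boldsymbol{B}_{\omega}^{\mathscr{F}^4_{0}}$. This contradicts Step~2. Alternatively, the identity is not even compatible with the requirement $(\boldsymbol{B}_{\omega}^{\mathscr{F}^4_{3}})\varepsilon^2\subseteq \boldsymbol{B}_{\omega}^{\mathscr{F}^4_{0}}$.

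There is no real obstacle here; the only point requiring care is checking the composed inclusions in Step~2. In fact the hypothesis $(\boldsymbol{B}_{\omega}^{\mathscr{F}^4_{3}})\varepsilon\subseteq \boldsymbol{B}_{\omega}^{\mathscr{F}^4_{0}}$ is essential there, since it sends both $\boldsymbol{B}_{\omega}^{\mathscr{F}^4_{1}}$ and $\boldsymbol{B}_{\omega}^{\mathscr{F}^4_{2}}$ into $\boldsymbol{B}_{\omega}^{\mathscr{F}^4_{0}}$ after two steps. The first two non-inclusion hypotheses of the statement are not needed for the argument.
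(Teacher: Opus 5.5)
Your proposal is correct and follows the paper's own argument. Both pass to $\varepsilon^2$, check that it maps $\boldsymbol{B}_{\omega}^{\mathscr{F}^4_{0,1,2}}$ into $\boldsymbol{B}_{\omega}^{\mathscr{F}^4_{0}}$, and invoke Proposition~\ref{proposition-2.1} to conclude that $\varepsilon^2$ is the identity, which is incompatible with $(\boldsymbol{B}_{\omega}^{\mathscr{F}^4_{1}})\varepsilon^2\subseteq\boldsymbol{B}_{\omega}^{\mathscr{F}^4_{0}}$; your Step~4 just states this final contradiction explicitly, where the paper leaves it implicit.
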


\begin{proof}
Since $\varepsilon^2$ is an injective monoid endomorphism of the semigroup $\boldsymbol{B}_{\omega}^{\mathscr{F}^4}$ such that $(\boldsymbol{B}_{\omega}^{\mathscr{F}^4_{0}})\varepsilon^2\subseteq \boldsymbol{B}_{\omega}^{\mathscr{F}^4_{0}}$, $(\boldsymbol{B}_{\omega}^{\mathscr{F}^4_{1}})\varepsilon^2\subseteq \boldsymbol{B}_{\omega}^{\mathscr{F}^4_{0}}$, and $(\boldsymbol{B}_{\omega}^{\mathscr{F}^4_{2}})\varepsilon^2\subseteq \boldsymbol{B}_{\omega}^{\mathscr{F}^4_{0}}$, Proposition~\ref{proposition-2.1} implies that the mapping  $\varepsilon^2$ is the identity transformation of the semigroup $\boldsymbol{B}_{\omega}^{\mathscr{F}^4}$. The obtained contradiction implies the statement of the proposition.
\end{proof}

We went through all the possible cases in Propositions~\ref{proposition-2.1}--\ref{proposition-2.9} and get the result which  summarize this in Theorem~\ref{theorem-2.10}.

\begin{theorem}\label{theorem-2.10}
Every injective monoid endomorphism of the semigroup $\boldsymbol{B}_{\omega}^{\mathscr{F}^4}$  is the identity transformation.
\end{theorem}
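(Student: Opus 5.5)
The plan is to derive Theorem~\ref{theorem-2.10} by an exhaustive case analysis that assembles Propositions~\ref{proposition-2.1}--\ref{proposition-2.9}. Let $\varepsilon$ be an injective monoid endomorphism of $\boldsymbol{B}_{\omega}^{\mathscr{F}^4}$. Each $\boldsymbol{B}_{\omega}^{\mathscr{F}^4_{p}}$ is isomorphic to the bicyclic semigroup. Hence, as already used in the proof of Proposition~\ref{proposition-2.3}, Proposition~4 of \cite{Gutik-Mykhalenych=2021} together with injectivity gives, for every $p\in\{0,1,2,3\}$, a unique $q=(p)\sigma\in\{0,1,2,3\}$ with $(\boldsymbol{B}_{\omega}^{\mathscr{F}^4_{p}})\varepsilon\subseteq \boldsymbol{B}_{\omega}^{\mathscr{F}^4_{q}}$. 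Since $\varepsilon$ is a monoid endomorphism and $(0,0,[0))$ is the identity, $(0)\sigma=0$. So $\varepsilon$ is encoded by a map $\sigma$ fixing $0$, and it suffices to check that every admissible $\sigma$ falls under one of the propositions.

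First, if $(\boldsymbol{B}_{\omega}^{\mathscr{F}^4_{0,1,2}})\varepsilon\subseteq \boldsymbol{B}_{\omega}^{\mathscr{F}^4_{0,1,2}}$, Proposition~\ref{proposition-2.1} gives $\varepsilon=\mathrm{id}$. Second, if $(\boldsymbol{B}_{\omega}^{\mathscr{F}^4_{1,2,3}})\varepsilon\subseteq \boldsymbol{B}_{\omega}^{\mathscr{F}^4_{1,2,3}}$, Proposition~\ref{proposition-2.2} gives $\varepsilon=\mathrm{id}$. Otherwise both inclusions fail, and I split according to $(1)\sigma\in\{0,1,2,3\}$:
\begin{itemize}
\item $(1)\sigma=0$ is excluded by Proposition~\ref{proposition-2.3}.
\item $(1)\sigma=1$ is excluded by Proposition~\ref{proposition-2.4}.
\item $(1)\sigma=2$ is excluded by Proposition~\ref{proposition-2.5}.
\item $(1)\sigma=3$ needs a further split on $(2)\sigma$.
\end{itemize}

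In the last case, $(2)\sigma=0$ is excluded by Proposition~\ref{proposition-2.6}. For $(2)\sigma\in\{1,2,3\}$, I observe that failure of the first inclusion forces some $p\in\{0,1,2\}$ with $(p)\sigma=3$; this is already true for $p=1$. Failure of the second inclusion forces some $p\in\{1,2,3\}$ with $(p)\sigma=0$. Since $(1)\sigma=3$ and $(2)\sigma\neq0$, this forces $(3)\sigma=0$. The remaining configurations are then exactly $\sigma=(0,3,1,0)$, $(0,3,2,0)$ and $(0,3,3,0)$. These are excluded by Propositions~\ref{proposition-2.7}, \ref{proposition-2.8} and \ref{proposition-2.9} respectively. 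Hence only the first two alternatives survive, and $\varepsilon$ is the identity.

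The main obstacle is making sure the case bookkeeping is genuinely exhaustive. In particular, the three propositions that fix $(1)\sigma\in\{0,1,2\}$ place no restriction on $(2)\sigma$ and $(3)\sigma$, so they must be stated as covering all such completions, which they are. The hypotheses listed in Propositions~\ref{proposition-2.7}--\ref{proposition-2.9} must also match the forced value $(3)\sigma=0$. I would verify this by explicitly deriving $(3)\sigma=0$ from the failure of the $\{1,2,3\}$-inclusion, as above, before invoking them.
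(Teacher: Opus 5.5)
Your proposal is correct and follows the paper's own argument: Theorem~\ref{theorem-2.10} is obtained by running through the cases covered by Propositions~\ref{proposition-2.1}--\ref{proposition-2.9}, and your encoding by the layer map $\sigma$ with $(0)\sigma=0$ simply makes explicit the exhaustiveness check that the paper leaves as a one-line remark. In particular, your observation that $(1)\sigma=3$ and $(2)\sigma\neq 0$ force $(3)\sigma=0$ correctly shows that the three configurations handled by Propositions~\ref{proposition-2.7}, \ref{proposition-2.8} and \ref{proposition-2.9} are the only ones left.
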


Theorem~\ref{theorem-2.10} implies Corollary~\ref{corollary-2.11} which generalizes Corollary~2 in \cite{Gutik-Mykhalenych=2021}.

\begin{corollary}\label{corollary-2.11}
The semigroup of all injective monoid endomorphisms of the semigroup $\boldsymbol{B}_{\omega}^{\mathscr{F}^4}$  is trivial.
\end{corollary}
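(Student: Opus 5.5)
The plan is to show that Propositions~\ref{proposition-2.1}--\ref{proposition-2.9} exhaust all possible behaviours of an injective monoid endomorphism $\varepsilon$ on the four bicyclic pieces $\boldsymbol{B}_{\omega}^{\mathscr{F}^4_{p}}$, $p\in\{0,1,2,3\}$. First I would record the two structural facts already used in the proofs above. By Proposition~4 of \cite{Gutik-Mykhalenych=2021} and injectivity, each $\boldsymbol{B}_{\omega}^{\mathscr{F}^4_{p}}$ (a copy of the bicyclic semigroup) is mapped into a single $\boldsymbol{B}_{\omega}^{\mathscr{F}^4_{q}}$. This gives a map $\sigma\colon\{0,1,2,3\}\to\{0,1,2,3\}$ with $(\boldsymbol{B}_{\omega}^{\mathscr{F}^4_{p}})\varepsilon\subseteq\boldsymbol{B}_{\omega}^{\mathscr{F}^4_{p\sigma}}$. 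Since $\varepsilon$ is a monoid endomorphism and the identity is $(0,0,[0))$, we have $0\sigma=0$.

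Next I would split by whether $\varepsilon$ preserves $\boldsymbol{B}_{\omega}^{\mathscr{F}^4_{0,1,2}}$ or $\boldsymbol{B}_{\omega}^{\mathscr{F}^4_{1,2,3}}$. If $\{0,1,2\}\sigma\subseteq\{0,1,2\}$, Proposition~\ref{proposition-2.1} gives the identity. If $\{1,2,3\}\sigma\subseteq\{1,2,3\}$, Proposition~\ref{proposition-2.2} gives the identity. So it remains to treat the case in which some element of $\{1,2\}$ is sent to $3$ and some element of $\{1,2,3\}$ is sent to $0$; here "some element of $\{0,1,2\}$ to $3$" reduces to $\{1,2\}$ because $0\sigma=0$. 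In this case I split on the value $1\sigma$. The value $1\sigma=0$ is excluded by Proposition~\ref{proposition-2.3}, $1\sigma=1$ by Proposition~\ref{proposition-2.4}, and $1\sigma=2$ by Proposition~\ref{proposition-2.5}.

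If $1\sigma=3$, I split on $2\sigma$. The value $2\sigma=0$ is excluded by Proposition~\ref{proposition-2.6}. For $2\sigma\in\{1,2,3\}$, the requirement that some element of $\{1,2,3\}$ goes to $0$ forces $3\sigma=0$. The resulting cases $2\sigma=1$, $2\sigma=2$, $2\sigma=3$ are excluded by Propositions~\ref{proposition-2.7}, \ref{proposition-2.8} and \ref{proposition-2.9}, respectively.

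The main obstacle is only bookkeeping: I must check that the case tree is genuinely exhaustive. In particular, in the branches $1\sigma\in\{0,1,2\}$ the propositions list only the hypothesis on $1\sigma$ together with the two non-inclusion conditions. Those propositions handle the remaining values of $2\sigma$ and $3\sigma$ internally, so no further subcases arise at this level. Once exhaustiveness is confirmed, every injective monoid endomorphism falls under Proposition~\ref{proposition-2.1} or \ref{proposition-2.2}, and hence is the identity transformation.
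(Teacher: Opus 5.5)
Your proposal is correct and is essentially the paper's argument: the paper reads the corollary off Theorem~\ref{theorem-2.10}, which it justifies only by remarking that Propositions~\ref{proposition-2.1}--\ref{proposition-2.9} cover all possible cases. Your case tree on the index map $\sigma$ makes that exhaustion explicit: $0\sigma=0$, then a split on $1\sigma$, and when $1\sigma=3$ a split on $2\sigma$, which forces $3\sigma=0$ if $2\sigma\neq 0$. Together with the trivial observation that the identity map is itself an injective monoid endomorphism, this gives exactly the paper's conclusion that the semigroup in question is $\{\mathrm{id}\}$.
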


Corollary~\ref{corollary-2.11} implies the following corollary.

\begin{corollary}[{Corollary~2 in \cite{Gutik-Mykhalenych=2021}}]\label{corollary-2.12}
The group of all automorphisms of the semigroup $\boldsymbol{B}_{\omega}^{\mathscr{F}^4}$  is trivial.
\end{corollary}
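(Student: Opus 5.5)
The plan is to derive Corollary~\ref{corollary-2.12} directly from Corollary~\ref{corollary-2.11}. The argument reduces an automorphism to an injective monoid endomorphism and then applies Theorem~\ref{theorem-2.10}.

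Let $\varepsilon$ be an automorphism of $\boldsymbol{B}_{\omega}^{\mathscr{F}^4}$. By definition, $\varepsilon$ is a bijective endomorphism, so it is injective. It remains to show that $\varepsilon$ is a monoid endomorphism, i.e., $(1)\varepsilon=1$, where $1=(0,0,[0))$ is the identity of $\boldsymbol{B}_{\omega}^{\mathscr{F}^4}$. Surjectivity gives this. For any $y\in\boldsymbol{B}_{\omega}^{\mathscr{F}^4}$ choose $x$ with $(x)\varepsilon=y$. Then
\begin{equation*}
(1)\varepsilon\cdot y=(1\cdot x)\varepsilon=y \qquad\hbox{and}\qquad y\cdot(1)\varepsilon=(x\cdot 1)\varepsilon=y,
\end{equation*}
so $(1)\varepsilon$ is a two-sided identity of $\boldsymbol{B}_{\omega}^{\mathscr{F}^4}$. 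By uniqueness of the identity in a monoid, $(1)\varepsilon=1$.

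Consequently every automorphism belongs to the semigroup of all injective monoid endomorphisms of $\boldsymbol{B}_{\omega}^{\mathscr{F}^4}$. By Corollary~\ref{corollary-2.11} (equivalently, Theorem~\ref{theorem-2.10}) that semigroup is trivial, so $\varepsilon$ is the identity transformation. Hence the automorphism group is trivial.

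There is no real obstacle. The only point needing care is the observation that a surjective endomorphism automatically preserves the identity, and the computation above settles it.
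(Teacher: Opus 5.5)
Your proposal is correct and follows the paper's route. The paper simply says that Corollary~\ref{corollary-2.11} implies this statement, and your check that a surjective endomorphism must fix the identity $(0,0,[0))$ makes explicit the one step the paper leaves unstated.
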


\section{\textbf{On injective monoid endomorphisms of the semigroup $\boldsymbol{B}_{\omega}^{\mathscr{F}^4}$}}\label{section-3}

By Proposition~1 of \cite{Gutik-Serivka=2026} the following mappings  $\lambda\colon \boldsymbol{B}_{\omega}^{\mathscr{F}^4}\to \boldsymbol{B}_{\omega}^{\mathscr{F}^4}$ and $\varpi_4\colon \boldsymbol{B}_{\omega}^{\mathscr{F}^4}\to \boldsymbol{B}_{\omega}^{\mathscr{F}^4}$ defined by the formulae
\begin{equation*}
  (i,j,[p))\lambda=(i+1,j+1,[p)) \quad \hbox{and} \quad (i,j,[p))\varpi_4=(i+p,j+p,[3-p)), \quad i,j\in\omega, p\in\{0,\ldots,3\},
\end{equation*}
are injective endomorphisms of the semigroup $\boldsymbol{B}_{\omega}^{\mathscr{F}^4}$. For  endomorphisms $\lambda$ and $\varpi_4$ the following equalities hold $\varpi_4^2=\lambda^3$ and $\varpi\lambda=\lambda\varpi$. Also, without loss of generality we may assume that $\lambda^0$ and $\varpi_4^0$ are the identity maps of $\boldsymbol{B}_{\omega}^{\mathscr{F}^4}$.

For any $p\in\{0,1,\ldots,3\}$ we define
\begin{equation*}
  \boldsymbol{B}_{\omega}^{\mathscr{F}^4}[(m,m,[p))]=(m,m,[p))\cdot \boldsymbol{B}_{\omega}^{\mathscr{F}^4}\cdot(m,m,[p))=(m,m,[p))\cdot \boldsymbol{B}_{\omega}^{\mathscr{F}^4}\cap \boldsymbol{B}_{\omega}^{\mathscr{F}^4}\cdot (m,m,[p)).
\end{equation*}

\begin{lemma}\label{lemma-3.1}
Let $\varepsilon$ be an injective endomorphism of the semigroup $\boldsymbol{B}_{\omega}^{\mathscr{F}^4}$. Then the followng statements hold:
\begin{itemize}
  \item[$(i)$]   $(0,0,[0))\varepsilon\notin\boldsymbol{B}_{\omega}^{\mathscr{F}^4_{p}}$, $p=1,2$;
  \item[$(ii)$]  $(\boldsymbol{B}_{\omega}^{\mathscr{F}^4})\varepsilon\nsubseteq\boldsymbol{B}_{\omega}^{\mathscr{F}^4_{0,1,2}}$;
  \item[$(iii)$] $(\boldsymbol{B}_{\omega}^{\mathscr{F}^4})\varepsilon\nsubseteq\boldsymbol{B}_{\omega}^{\mathscr{F}^4_{1,2,3}}$.
\end{itemize}
\end{lemma}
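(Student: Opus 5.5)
The plan is to use the structural facts already used repeatedly in Section~\ref{section-2}. By Proposition~4 of \cite{Gutik-Mykhalenych=2021} and injectivity, each bicyclic subsemigroup $\boldsymbol{B}_{\omega}^{\mathscr{F}^4_{p}}$ is mapped by $\varepsilon$ into a single $\boldsymbol{B}_{\omega}^{\mathscr{F}^4_{q}}$. Also, $\varepsilon$ preserves idempotents and the natural partial order (Proposition~1.4.21 of \cite{Lawson=1998}), whose description on $E(\boldsymbol{B}_{\omega}^{\mathscr{F}^4})$ is given by Proposition~3 of \cite{Gutik-Mykhalenych=2021}. Every element of $\boldsymbol{B}_{\omega}^{\mathscr{F}^4}$ lies in $\boldsymbol{B}_{\omega}^{\mathscr{F}^4}[(0,0,[0))]$, since $(0,0,[0))$ is the identity. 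Hence the whole image $(\boldsymbol{B}_{\omega}^{\mathscr{F}^4})\varepsilon$ lies in the local submonoid $\boldsymbol{B}_{\omega}^{\mathscr{F}^4}[e]$, where $e=(0,0,[0))\varepsilon=(m,m,[q))$.

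\textbf{Step 1: statement $(i)$.} Suppose $e=(m,m,[q))$ with $q\in\{1,2\}$. I would compute $\boldsymbol{B}_{\omega}^{\mathscr{F}^4}[(m,m,[q))]$ from \eqref{eq-1.2}. Its elements are the $(i,j,F)$ with $i,j\geqslant m$ and with the set component cut down relative to $[q)$. This corner is isomorphic to $\boldsymbol{B}_{\omega}^{\mathscr{F}'}$ for a family $\mathscr{F}'$ with fewer than four sets. For example, with $q=1$ the product $(m,m,[1))\cdot(i,j,[0))\cdot(m,m,[1))$ already has third coordinate $[0)\cap[1)$-type sets, so in the corner $[0)$ and $[1)$ collapse.

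I would then argue directly that the four idempotents $(0,0,[p))$, $p=0,\ldots,3$, form a chain $(0,0,[3))\preccurlyeq(0,0,[2))\preccurlyeq(0,0,[1))\preccurlyeq(0,0,[0))$ whose consecutive products with $(1,1,[0))$ behave as in \eqref{eq-2.11}. Their images must be pairwise distinct idempotents below $e$ in distinct components, satisfying $(1,1,[0))\varepsilon\cdot(0,0,[p+1))\varepsilon=(1,1,[p))\varepsilon$. Counting the components available below $(m,m,[q))$ in the order of Lemma~5 of \cite{Gutik-Mykhalenych=2020} should yield a contradiction, in the spirit of the computation in case $(iv)$ of Proposition~\ref{proposition-2.6}. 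The reason is that multiplying by the image of $(1,1,[0))$, which lies in $\boldsymbol{B}_{\omega}^{\mathscr{F}^4_q}$, cannot move an element out of components $\geqslant q$ in the required way.

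\textbf{Step 2: statement $(ii)$.} If the image lies in $\boldsymbol{B}_{\omega}^{\mathscr{F}^4_{0,1,2}}$, then $\varepsilon$ is an injective endomorphism of $\boldsymbol{B}_{\omega}^{\mathscr{F}^4}$ into a copy of $\boldsymbol{B}_{\omega}^{\mathscr{F}^3}$. The four bicyclic pieces go into three, so by the pigeonhole principle two distinct $p<p'$ map into the same $\boldsymbol{B}_{\omega}^{\mathscr{F}^4_q}$. Then $(0,0,[p'))\preccurlyeq(0,0,[p))$ map to comparable idempotents $(a,a,[q))$ and $(b,b,[q))$ with $a> b$. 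Multiplying $(0,0,[p'))$ by a suitable $(1,1,[p))$ or $(1,1,[0))$ and comparing with the images, as in Proposition~\ref{proposition-2.1}, gives an element outside $\boldsymbol{B}_{\omega}^{\mathscr{F}^4_{q}}$ on one side and inside it on the other, contradicting injectivity or the homomorphism property.

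\textbf{Step 3: statement $(iii)$.} The argument is the same: the image would lie in the copy $\boldsymbol{B}_{\omega}^{\mathscr{F}^4_{1,2,3}}\cong\boldsymbol{B}_{\omega}^{\mathscr{F}^3}$, and the same pigeonhole-plus-product contradiction applies. Alternatively, one can compose with the isomorphism $\mathfrak{I}$ and invoke Theorem~1 of \cite{Gutik-Serivka=2026}, whose list of injective endomorphisms of $\boldsymbol{B}_{\omega}^{\mathscr{F}^3}$ shows that the piece-maps are injective on components.

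The main obstacle I expect is Step~1 (equivalently, the pigeonhole contradiction in Steps~2--3). One must check carefully, using \eqref{eq-1.2}, that two distinct components landing in the same bicyclic piece really forces a failure of injectivity or of the homomorphism identity. I would first try the explicit products $(1,1,[0))\cdot(0,0,[p))=(1,1,[p-1))$.
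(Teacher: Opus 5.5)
\textbf{There is a genuine gap.} None of your three steps contains a mechanism that actually produces a contradiction.

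\textbf{Step 1.} The structural claim about the corner is false. For $i,j\geqslant m+q$ and every $p$, formula \eqref{eq-1.2} gives $(m,m,[q))\cdot(i,j,[p))\cdot(m,m,[q))=(i,j,[p))$. So $\lambda^{m+q}$ embeds all of $\boldsymbol{B}_{\omega}^{\mathscr{F}^4}$ into $\boldsymbol{B}_{\omega}^{\mathscr{F}^4}[(m,m,[q))]$; the set coordinates are cut down only when $\min\{i,j\}<m+q$. Moreover $(m+q,m+q,[p))\preccurlyeq(m,m,[q))$ for every $p$. Every component is therefore available below $e$, and no counting argument can work.

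Your heuristic fails too, where $(1,1,[0))\varepsilon=(m+k,m+k,[q))$. For $a>m+k$ one gets $(m+k,m+k,[q))\cdot(a,a,[r))=(a,a,[\max\{r,q+m+k-a\}))$, which can lie in a component below $q$.

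Worse, consider the map
$(i,j,[0))\mapsto(2i,2j,[1))$, $(i,j,[1))\mapsto(2i,2j,[3))$, $(i,j,[2))\mapsto(2i+2,2j+2,[3))$, $(i,j,[3))\mapsto(2i+4,2j+4,[3))$.
It is a (non-injective) endomorphism sending the identity to $(0,0,[1))$. The images of the $(0,0,[p))$ are pairwise distinct, form a chain below $e$, and satisfy all your product relations with $(1,1,[0))\varepsilon$. The only constraint on your list that it violates is ``distinct components'', and you assert that without proof.

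\textbf{Step 2.} That claim is exactly your pigeonhole principle, and it is false as a local statement. By \eqref{eq-2.8} with $k=2$, $p=1$, the map $(i,j,[0))\mapsto(2i,2j,[0))$, $(i,j,[1))\mapsto(2i+1,2j+1,[0))$ is an injective homomorphism of $\boldsymbol{B}_{\omega}^{\mathscr{F}^4_{0,1}}$ into the single piece $\boldsymbol{B}_{\omega}^{\mathscr{F}^4_{0}}$. So comparing products of $(0,0,[p'))$ with $(1,1,[p))$ or $(1,1,[0))$ cannot yield a contradiction. One has to use all four pieces, and the bound $p\leqslant 3$, globally.

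\textbf{Step 3.} It inherits this gap, and its alternative is too weak. Injectivity on components of the restrictions to $\boldsymbol{B}_{\omega}^{\mathscr{F}^4_{0,1,2}}$ and $\boldsymbol{B}_{\omega}^{\mathscr{F}^4_{1,2,3}}$ merely forces $\boldsymbol{B}_{\omega}^{\mathscr{F}^4_{0}}$ and $\boldsymbol{B}_{\omega}^{\mathscr{F}^4_{3}}$ into the same piece, which is not a contradiction by itself. What you need from Theorem~1 of \cite{Gutik-Serivka=2026} is that components are mapped in order or in reverse order. With that, in (iii) the first restriction sends $\boldsymbol{B}_{\omega}^{\mathscr{F}^4_{1}}$ into $\boldsymbol{B}_{\omega}^{\mathscr{F}^4_{2}}$, while the second sends it into $\boldsymbol{B}_{\omega}^{\mathscr{F}^4_{1}}$ or $\boldsymbol{B}_{\omega}^{\mathscr{F}^4_{3}}$, which is a genuine contradiction.

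\textbf{What the paper does.} It avoids your Steps 2--3 entirely.
\begin{itemize}
\item[(ii)] Once (i) holds, an image inside $\boldsymbol{B}_{\omega}^{\mathscr{F}^4_{0,1,2}}$ forces $(0,0,[0))\varepsilon=(m,m,[0))$. Then $\varepsilon\circ\mathfrak{I}_m^{-1}$ is an injective monoid endomorphism, hence the identity by Theorem~\ref{theorem-2.10}, which is absurd.
\item[(iii)] This follows by applying (ii) to $\varepsilon\circ\varpi_4$.
\item[(i)] The paper restricts $\varepsilon$ to $\boldsymbol{B}_{\omega}^{\mathscr{F}^4_{0,1,2}}\cong\boldsymbol{B}_{\omega}^{\mathscr{F}^3}$ and invokes Theorem~1 of \cite{Gutik-Serivka=2026}. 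It reduces the $[2)$-case to the $[1)$-case via $\varpi_4$.
\end{itemize}
Be aware that this last reduction presupposes $(\boldsymbol{B}_{\omega}^{\mathscr{F}^4_{0,1,2}})\varepsilon\subseteq\boldsymbol{B}_{\omega}^{\mathscr{F}^4_{0,1,2}}$. The example above shows this is not automatic unless injectivity is used, so (i) is the genuinely hard part in either approach.
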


\begin{proof}
$(i)$ Suppose to the contrary that there exists an injective endomorphism $\varepsilon$ of $\boldsymbol{B}_{\omega}^{\mathscr{F}^4}$ such that $(0,0,[0))\varepsilon\in\boldsymbol{B}_{\omega}^{\mathscr{F}^4_{1}}$. Then the restriction $\varepsilon{\upharpoonright}_{\boldsymbol{B}_{\omega}^{\mathscr{F}^4_{0,1,2}}}\colon\boldsymbol{B}_{\omega}^{\mathscr{F}^4_{0,1,2}}\to \boldsymbol{B}_{\omega}^{\mathscr{F}^4_{0,1,2}}$ of $\varepsilon$ is an injective endomorphism of the subsemigroup $\boldsymbol{B}_{\omega}^{\mathscr{F}^4_{0,1,2}}$ of $\boldsymbol{B}_{\omega}^{\mathscr{F}^4}$. By Proposition~1 of \cite{Gutik-Mykhalenych=2021} the mapping $\mathfrak{I}\colon \boldsymbol{B}_{\omega}^{\mathscr{F}^3}\to \boldsymbol{B}_{\omega}^{\mathscr{F}^4_{0,1,2}}$, $(i,j,[p))\to (i,j,[p))$, $i,j\in\omega$, $p\in\{0,1,2\}$, is an isomorphism. Then the mapping $\varepsilon_{\mathfrak{I}}=\mathfrak{I}\circ\varepsilon\circ\mathfrak{I}^{-1}$ is an injective endomorphism of the semigroup $\boldsymbol{B}_{\omega}^{\mathscr{F}^3}$. Theorem~1 of \cite{Gutik-Serivka=2026} implies that there exist non-negative integers $k$ and $n$ and $m\in\{0,1\}$ such that $\varepsilon_{\mathfrak{I}}=\alpha_{[k]}\circ\lambda^n\circ\varpi^m$, where the mappings $\lambda$, $\alpha_{[k]}$ and $\varpi$ are defined by formulae \eqref{eq-2.1}, \eqref{eq-2.2} and \eqref{eq-2.3}, respectively. Hence we get that either $(0,0,[0))\varepsilon\in\boldsymbol{B}_{\omega}^{\mathscr{F}^4_{0}}$ or $(0,0,[0))\varepsilon\in\boldsymbol{B}_{\omega}^{\mathscr{F}^4_{2}}$, which contradicts the assumption $(0,0,[0))\varepsilon\in\boldsymbol{B}_{\omega}^{\mathscr{F}^4_{1}}$.

Suppose that there exists an injective endomorphism $\varepsilon$ of $\boldsymbol{B}_{\omega}^{\mathscr{F}^4}$ such that $(0,0,[0))\varepsilon\in\boldsymbol{B}_{\omega}^{\mathscr{F}^4_{2}}$. Since by Proposition~1$(ii)$ of \cite{Gutik-Serivka=2026} the mapping $\varpi_4$ is an injective endomorphism of the semigroup $\boldsymbol{B}_{\omega}^{\mathscr{F}^4}$, the composition $\varepsilon\circ\varpi_4$ is an injective endomorphism of $\boldsymbol{B}_{\omega}^{\mathscr{F}^4}$, as well. Simple verification shows that $(0,0,[0))(\varepsilon\circ \varpi_4)\in \boldsymbol{B}_{\omega}^{\mathscr{F}^4_{1}}$, which contradicts the previous part of the proof.

$(ii)$ Suppose the contrary: there exists an injective endomorphism $\varepsilon$ of $\boldsymbol{B}_{\omega}^{\mathscr{F}^4}$ such that  $(\boldsymbol{B}_{\omega}^{\mathscr{F}^4})\varepsilon\subseteq\boldsymbol{B}_{\omega}^{\mathscr{F}^4_{0,1,2}}$. Statement $(i)$ implies that $(0,0,[0))\varepsilon\in\boldsymbol{B}_{\omega}^{\mathscr{F}^4_{0}}$. By Lemma~2 of \cite{Gutik-Mykhalenych=2020} there exists a non-negative integer $m$ such that $(0,0,[0))\varepsilon=(m,m,[0))$, because a homomorphic image of an idempotent is an idempotent, too. By Lemma~2 of \cite{Gutik-Serivka=2026} the semigroups $\boldsymbol{B}_{\omega}^{\mathscr{F}^4}$ and $\boldsymbol{B}_{\omega}^{\mathscr{F}^4}[(m,m,[0))]$ are isomorphic by the mapping $\mathfrak{I}_m\colon \boldsymbol{B}_{\omega}^{\mathscr{F}^4}\to \boldsymbol{B}_{\omega}^{\mathscr{F}^4}[(m,m,[0))]$, $(i,j,[p))\mapsto (i+m,j+m,[p))$, $i,j\in\omega$, $p\in\{0,1,2,3\}$. Then the map $\varepsilon\circ\mathfrak{I}_m^{-1}\colon \boldsymbol{B}_{\omega}^{\mathscr{F}^4}\to \boldsymbol{B}_{\omega}^{\mathscr{F}^4}$ is an injective endomorphism of the semigroup $\boldsymbol{B}_{\omega}^{\mathscr{F}^4}$ as a composition of injective endomorphisms. Since $(0,0,[0))(\varepsilon\circ\mathfrak{I}_m^{-1})=(0,0,[0))$, the mapping $\varepsilon\circ\mathfrak{I}_m^{-1}$ is a monoid endomorphism of $\boldsymbol{B}_{\omega}^{\mathscr{F}^4}$. By Theorem~\ref{theorem-2.10} the endomorphism $\varepsilon\circ\mathfrak{I}_m^{-1}$ is the identity transformation of $\boldsymbol{B}_{\omega}^{\mathscr{F}^4}$. But the inclusion $(\boldsymbol{B}_{\omega}^{\mathscr{F}^4})\varepsilon\subseteq\boldsymbol{B}_{\omega}^{\mathscr{F}^4_{0,1,2}}$ implies that $(\boldsymbol{B}_{\omega}^{\mathscr{F}^4})(\varepsilon\circ\mathfrak{I}_m^{-1})\subseteq\boldsymbol{B}_{\omega}^{\mathscr{F}^4_{0,1,2}}$, a contradiction. The obtained contradiction implies statement $(ii)$.

$(iii)$ Suppose to the contrary that there exists an injective endomorphism $\varepsilon$ of $\boldsymbol{B}_{\omega}^{\mathscr{F}^4}$ such that $(\boldsymbol{B}_{\omega}^{\mathscr{F}^4})\varepsilon\subseteq\boldsymbol{B}_{\omega}^{\mathscr{F}^4_{1,2,3}}$. Since by Proposition~1 of \cite{Gutik-Serivka=2026} the mapping $\varpi_4\colon \boldsymbol{B}_{\omega}^{\mathscr{F}^4}\to \boldsymbol{B}_{\omega}^{\mathscr{F}^4}$ is an injective endomorphism of $\boldsymbol{B}_{\omega}^{\mathscr{F}^4}$, we conclude that the composition $\varepsilon\circ\varpi_4\colon \boldsymbol{B}_{\omega}^{\mathscr{F}^4}\to \boldsymbol{B}_{\omega}^{\mathscr{F}^4}$ is an injective endomorphism of $\boldsymbol{B}_{\omega}^{\mathscr{F}^4}$, too. Simple verifications show that $(\boldsymbol{B}_{\omega}^{\mathscr{F}^4})(\varepsilon\circ\varpi_4)\subseteq \boldsymbol{B}_{\omega}^{\mathscr{F}^4_{0,1,2}}$, which contradicts statement $(ii)$. The obtained contradiction implies that statement $(iii)$ holds.
\end{proof}

\begin{theorem}\label{theorem-3.2}
For every injective endomorphism $\varepsilon$ of the semigroup $\boldsymbol{B}_{\omega}^{\mathscr{F}^4}$  there exist a non-negative integer $k$ and $l\in\{0;1\}$ such that $\varepsilon=\lambda^k\varpi_4^l$.
\end{theorem}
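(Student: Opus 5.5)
The plan is to reduce an arbitrary injective endomorphism $\varepsilon$ to a monoid one and then apply Theorem~\ref{theorem-2.10}.

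\emph{Step 1: the image of the identity.} Since $(0,0,[0))$ is an idempotent, Lemma~2 of \cite{Gutik-Mykhalenych=2020} gives $(0,0,[0))\varepsilon=(m,m,[q))$ for some $m\in\omega$ and $q\in\{0,1,2,3\}$. Lemma~\ref{lemma-3.1}$(i)$ excludes $q=1,2$, so either $q=0$ or $q=3$.

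\emph{Step 2: the case $q=0$.} As in the proof of Lemma~\ref{lemma-3.1}$(ii)$, every element of $\boldsymbol{B}_{\omega}^{\mathscr{F}^4}$ equals $(0,0,[0))\cdot x\cdot(0,0,[0))$. Hence the image of $\varepsilon$ lies in $\boldsymbol{B}_{\omega}^{\mathscr{F}^4}[(m,m,[0))]$. By Lemma~2 of \cite{Gutik-Serivka=2026}, this set is the image of the isomorphism $\mathfrak{I}_m\colon(i,j,[p))\mapsto(i+m,j+m,[p))$, and $\mathfrak{I}_m$ coincides with $\lambda^m$ as a map.

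It follows that $\varepsilon\circ\mathfrak{I}_m^{-1}$ is an injective monoid endomorphism. By Theorem~\ref{theorem-2.10} it is the identity, so $\varepsilon=\mathfrak{I}_m=\lambda^m$. This gives the form $\lambda^k\varpi_4^0$ with $k=m$.

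\emph{Step 3: the case $q=3$.} Consider the composite $\varepsilon\circ\varpi_4$, which is injective by Proposition~1 of \cite{Gutik-Serivka=2026}. We have $(m,m,[3))\varpi_4=(m+3,m+3,[0))$, so Step~2 applies and yields $\varepsilon\circ\varpi_4=\lambda^{m+3}$.

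We now need to recover $\varepsilon$ from this. Use $\varpi_4^2=\lambda^3$ together with the commutation $\lambda\varpi_4=\varpi_4\lambda$. Then $\lambda^m\varpi_4\circ\varpi_4=\lambda^{m+3}=\varepsilon\circ\varpi_4$. Since $\varpi_4$ is injective, it is right-cancellable in the composition of maps (the paper writes maps on the right, so $\varepsilon\circ\varpi_4$ means apply $\varepsilon$ first). Therefore $\varepsilon=\lambda^m\varpi_4$.

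As a safer alternative, one can compute directly. For each $x$ we have $(x)\varepsilon\varpi_4=(x)\lambda^m\varpi_4\varpi_4$, and injectivity of $\varpi_4$ gives $(x)\varepsilon=(x)\lambda^m\varpi_4$.

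\emph{Main obstacle.} The delicate point is Step~2: checking that $\mathfrak{I}_m^{-1}$, composed after $\varepsilon$, is well defined. This requires the image of $\varepsilon$ to lie inside the local submonoid determined by $(m,m,[0))$, which follows from $x=(0,0,[0))x(0,0,[0))$.

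One must also verify that $\varpi_4$ indeed sends $(m,m,[3))$ to an element of $\boldsymbol{B}_{\omega}^{\mathscr{F}^4_{0}}$, so that Step~2 is applicable in Step~3. Everything else reduces to bookkeeping with the composition conventions.
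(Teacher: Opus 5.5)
Your proof is correct and follows the paper's argument essentially step for step. You use Lemma~\ref{lemma-3.1}$(i)$ to force $(0,0,[0))\varepsilon$ into $\boldsymbol{B}_{\omega}^{\mathscr{F}^4_{0}}$ or $\boldsymbol{B}_{\omega}^{\mathscr{F}^4_{3}}$. In the first case you pass through the shift $\mathfrak{I}_m=\lambda^m$ onto the local submonoid and apply Theorem~\ref{theorem-2.10}, and in the second case you compose with $\varpi_4$. The only deviation is at the very end, where you cancel $\varpi_4$ directly from $\varepsilon\varpi_4=(\lambda^m\varpi_4)\varpi_4$, whereas the paper composes once more with $\varpi_4$ and cancels $\lambda^3$. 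Your version is slightly cleaner, since it needs only $\varpi_4^2=\lambda^3$ and not the commutation $\lambda\varpi_4=\varpi_4\lambda$.
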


\begin{proof}
Fix an arbitrary injective endomorphism $\varepsilon$ of the semigroup $\boldsymbol{B}_{\omega}^{\mathscr{F}^4}$.

If $(0,0,[0))\varepsilon=(0,0,[0))$, then $\varepsilon$ is a monoid endomorphism of $\boldsymbol{B}_{\omega}^{\mathscr{F}^4}$. Then by Theorem~\ref{theorem-2.10} the mapping $\varepsilon$ is the identity transformation of $\boldsymbol{B}_{\omega}^{\mathscr{F}^4}$, i.e., $\varepsilon=\lambda^0\varpi_4^0$.

Suppose that $(0,0,[0))\varepsilon\neq(0,0,[0))$. By Lemma~\ref{lemma-3.1} we have that either $(0,0,[0))\varepsilon\in\boldsymbol{B}_{\omega}^{\mathscr{F}^4_{0}}$ or $(0,0,[0))\varepsilon\in\boldsymbol{B}_{\omega}^{\mathscr{F}^4_{3}}$. If $(0,0,[0))\varepsilon\in\boldsymbol{B}_{\omega}^{\mathscr{F}^4_{0}}\setminus\{(0,0,[0))\}$, then by Lemma~2 of \cite{Gutik-Mykhalenych=2020} there exists a positive integer $k$ such that $(0,0,[0))\varepsilon=(k,k,[0))$, because a homomorphic image of an idempotent is an idempotent, too. Since $(0,0,[0))$ is the unit of the monoid $\boldsymbol{B}_{\omega}^{\mathscr{F}^4}$, we conclude that $(\boldsymbol{B}_{\omega}^{\mathscr{F}^4})\varepsilon\subseteq \boldsymbol{B}_{\omega}^{\mathscr{F}^4}[(k,k,[0))]$. By Corollary~1 of \cite{Gutik-Serivka=2026} the semigroups $\boldsymbol{B}_{\omega}^{\mathscr{F}^4}$ and $\boldsymbol{B}_{\omega}^{\mathscr{F}^4}[(k,k,[0))]$ are isomorphic by the mapping $\mathfrak{I}_k\colon \boldsymbol{B}_{\omega}^{\mathscr{F}^4}\to \boldsymbol{B}_{\omega}^{\mathscr{F}^4}[(k,k,[0))]$, $(i,j,[p))\mapsto(i+k,j+k,[p))$, $i,j\in\omega$, $p\in\{0,1,2,3\}$. It is obvious that the composition  $\varepsilon\circ \mathfrak{I}_k^{-1}$ is an injective endomorphism of the semigroup $\boldsymbol{B}_{\omega}^{\mathscr{F}^4}$ such that $(0,0,[0))(\varepsilon\circ \mathfrak{I}_k^{-1})=(0,0,[0))$. By Theorem~\ref{theorem-2.10}, $\varepsilon\circ \mathfrak{I}_k^{-1}$ is the identity transformation of $\boldsymbol{B}_{\omega}^{\mathscr{F}^4}$, and hence $\varepsilon=\mathfrak{I}_k$. Since $\mathfrak{I}_k=\lambda^k$, we get that $\varepsilon=\lambda^k$.

If $(0,0,[0))\varepsilon\in\boldsymbol{B}_{\omega}^{\mathscr{F}^4_{3}}$, then by Lemma~2 of \cite{Gutik-Mykhalenych=2020} there exists a non-negative integer $k$ such that $(0,0,[0))\varepsilon=(k,k,[3))$, because a homomorphic image of an idempotent is an idempotent, as well. Put $\varepsilon^{\prime}=\varepsilon\circ\varpi_4$. By Proposition~1 of \cite{Gutik-Serivka=2026} we get that
\begin{align*}
  (0,0,[0))\varepsilon^{\prime}&=(0,0,[0))(\varepsilon\circ\varpi_4)= \\
   &=(k,k,[3))\varpi_4= \\
   &=(k+3,k+3,[0)).
\end{align*}
By the above part of the proof we obtain that $\varepsilon^{\prime}=\lambda^{k+3}$, i.e., $\varepsilon\circ\varpi_4=\lambda^{k+3}$. Hence we get that $\varepsilon\circ\varpi_4^2=\lambda^{k+3}\circ\varpi_4$. Proposition~1 of \cite{Gutik-Serivka=2026} implies that $\varpi_4^2=\lambda^{3}$, and hence we have that
\begin{equation*}
  \varepsilon\circ\lambda^3=\lambda^{k+3}\circ\varpi_4=\varpi_4\circ\lambda^{k+3}.
\end{equation*}
Since $\varepsilon$, $\lambda$ and $\varpi_4$ are injective transformations of $\boldsymbol{B}_{\omega}^{\mathscr{F}^4}$, we conclude that $\varepsilon=\lambda^{k}\circ\varpi_4=\varpi_4\circ\lambda^{k}$.

This completes the proof of the theorem.
\end{proof}




\end{document}